\pgfplotsset{compat=1.14}
\crefname{hypothesis}{Hypothesis}{Hypotheses}
\Crefname{ALC@unique}{Line}{Lines}
\numberwithin{theorem}{section}
\newcommand{\R}{\mathbb{R}}
\colorlet{texcscolor}{blue!50!black}
\colorlet{texemcolor}{red!70!black}
\colorlet{texpreamble}{red!70!black}
\colorlet{codebackground}{black!25!white!25}
\lstdefinestyle{siamlatex}{%
  style=tcblatex,
  texcsstyle=*\color{texcscolor},
  texcsstyle=[2]\color{texemcolor},
  keywordstyle=[2]\color{texemcolor},
  moretexcs={cref,Cref,maketitle,mathcal,text,headers,email,url},
}
\DeclareTotalTCBox{\code}{ v O{} }
{ 
  fontupper=\ttfamily\color{black},
  nobeforeafter,
  tcbox raise base,
  colback=codebackground,colframe=white,
  top=0pt,bottom=0pt,left=0mm,right=0mm,
  leftrule=0pt,rightrule=0pt,toprule=0mm,bottomrule=0mm,
  boxsep=0.5mm,
  #2}{#1}
\patchcmd\newpage{\vfil}{}{}{}
\title{Multiscale Global sensitivity analysis for stochastic chemical systems
  \thanks{Submitted to the editors \today.
\funding{This work was supported by the National Science Foundation under grant DMS-1745654.}}}
\author{M. Merritt\thanks{Department of Mathematics, North Carolina State
University, Raleigh, NC 27695-8205 (\email{mbmerrit@ncsu.edu})}  \and A.~Alexanderian\thanks{Department of Mathematics, North Carolina State University,
Raleigh, NC 27695-8205 (\email{alexanderian@ncsu.edu})}\and P.A.~Gremaud\thanks{Department of Mathematics, North Carolina State University, Raleigh, NC (\email{gremaud@ncsu.edu}).}
}
\begin{document}
\maketitle

\begin{tcbverbatimwrite}{tmp_\jobname_abstract.tex}
\begin{abstract}
Sensitivity analysis is routinely performed on simplified surrogate models as
the cost of such analysis  on the original model may be prohibitive. Little is
known in general about the induced bias on the sensitivity results. Within the
framework of chemical kinetics, we provide a full justification of the above
approach in the case of variance based methods provided the surrogate model
results from the original one through the thermodynamic limit.  We also provide
illustrative numerical examples in context of a Michaelis--Menten system  and a
biochemical reaction network describing a genetic oscillator.

\end{abstract}

\begin{keywords}
chemical reaction networks, stochastic processes, global 
sensitivity analysis, multiscale modeling, thermodynamic limit
\end{keywords}

\begin{AMS}
65C20,        
65Z05,        
92E20,        
80A30         
\end{AMS}
\end{tcbverbatimwrite}
\input{tmp_\jobname_abstract.tex}

\section{Introduction}\label{sec:intro}
Striking a balance between accuracy and cost is one of the core challenges of
scientific computing. A high fidelity, high cost model $g$ is thus often replaced in practice by a lower cost model $\tilde g$, of (usually) lower fidelity, to enable the analysis of the application under study. The techniques   to develop and construct surrogate models are many and range from approximation theory to physics \cite{surrogate-koziel}. The analysis of the original model $g$ is then replaced by the analysis of a surrogate $\tilde g$ with the implicit assumption that 
\begin{eqnarray}
\mbox{if } g \approx \tilde g \mbox{ then } \mathcal I(g) \approx \mathcal I(\tilde g), \label{general-q}
\end{eqnarray}
where $\mathcal I$ represents some operation on $g$. The extent to which
(\ref{general-q}) is satisfied clearly depends on $\mathcal I$ and on the
relationship between $g$ and $\tilde g$.  This paper is a first step toward the
justification of (\ref{general-q}) when $\mathcal I$ stands for the sensitivity
of the model to its input parameters. We restrict our attention to an important
family of physically based surrogates corresponding to $\tilde g$ being the
thermodynamic limit of $g$ and take chemical reaction networks as a motivating
application. Recent results about  approximation based---rather than physically
based---surrogates can be found in~\cite{qian}.

Consider thus the evolution of a system of chemically reacting molecules;
molecular dynamics simulation is the most faithful way of modeling such a
system. There, each individual molecule and corresponding species population
are tracked and chemical reactions are modeled as distinct events. Due to
quantum effects and since such systems are typically not isolated, molecular
populations are integer variables which evolve stochastically
\cite{gillespie07}.    In spite of  this,  chemical kinetics is often analyzed
using real---as opposed to integer---variables which evolve deterministically;
that this is the case is a testimony to the appeal of simplified low-cost
models. {\em Stochastic} chemical kinetics is  however necessary to the study
of many cellular systems in biology where the relatively small molecular
populations may preclude the use of simplified models obtained through the
thermodynamic limit, i.e., in the limit of large volumes  and may require a
stochastic rather than deterministic model.

Assume we have both a high cost stochastic model $g$ and a low cost
deterministic surrogate $\tilde g$ such that 
\begin{eqnarray}
q = g(\mathbf k,\omega), \quad \tilde q = \tilde g(\mathbf k) \quad \mbox{ and } q \approx \tilde q \mbox{ in some sense,} \label{qoi}
\end{eqnarray}
where the outcome $\omega$ corresponds to the intrinsic stochasticity of the
model $g$ and $q$ and $\tilde q$ are the respective quantities of interest
(QoIs); here $\mathbf k = (k_1, \dots, k_M)$ is a list of shared uncertain
parameters.  As shown below, the field of chemical kinetics falls under this
framework.

Global sensitivity analysis (GSA) aims to quantify the relative importance of
uncertain model parameters in determining the QoI
\cite{iooss,janon14,saltellibook}.  We analyze whether GSA can be performed on
the surrogate $\tilde g$ rather than $g$ and still yield information on the
original model $g$. In other words, we are asking when the diagram in
Figure~\ref{fig:diagram} is commutative.

\begin{figure}[ht]
\[\begin{tikzcd}
q = g(\mathbf k,\omega)  \arrow{r}{\mbox{ GSA }} \arrow[swap]{d}{\mbox{limiting process}} & \{\mathcal I_j(\omega)\}_{j=1}^M \arrow{d}{\mbox{limiting process}\\} \\
\tilde q = \tilde g(\mathbf k) \arrow{r}{\mbox{ GSA }} & \{\tilde {\mathcal I} _j\}_{j=1}^M
\end{tikzcd}
\]
\caption{Schematic representation of the question considered in this paper: for
what type of limiting process is the diagram commutative? The model $g$ is
expensive-to-evaluate and stochastic while the surrogate model $\tilde g$ is
deterministic and cheap.  We show that the diagram is commutative if the
limiting process is the thermodynamic limit.}
\label{fig:diagram}
\end{figure}

In Figure~\ref{fig:diagram}, $\mathcal I$ and $\tilde {\mathcal I} $ refer to
importance indices from some GSA method; presumably, when applied to stochastic
models, the GSA approach yields indices which themselves are random variables.
This is for instance the case for variance based methods and Sobol' indices
which we use in this paper, see \cite{hag} and  Section~\ref{sec:gsa}. For
chemical kinetics, the {\em limiting process}  in the above diagram is the thermodynamic limit, see Section~\ref{sec:models}. The above diagram does not in general commute; see \cite{hag} for simple analytical 
examples of non-commutativity when the 
 limiting process linking the stochastic model to its
surrogate is the expectation or some other $\omega$-moment.

\section{Chemical kinetics models}
\label{sec:models}
We consider chemical systems with $N$ reacting species.  We let $\mathbf X(t)$
be the state vector of a chemcial system, where $X_i(t)$, the $i$th component
of $\mathbf X(t)$,  corresponds to the number of molecules of $i$th species,
$i=1,\dots, N$, at time $t$.  

\subsection{The RTC representation} 
To guide our discussion, consider the simple case
of one reaction and three species $S_1$, $S_2$ and $S_3$
\begin{eqnarray}
S_1 + S_2 \to S_3, \label{baby}
\end{eqnarray}
where one molecule of $S_1$ and one molecule of $S_2$ combine to produce 
one molecule of $S_3$. The evolution of the state 
$\mathbf X(t) = \begin{bmatrix} X_1(t) & X_2(t) & X_3(t)\end{bmatrix}^\top$ 
takes the form
\begin{eqnarray}
\mathbf X(t) = \mathbf X(0) + \boldsymbol\nu R(t), \label{threespecies}
\end{eqnarray}
where $\boldsymbol\nu = \begin{bmatrix}-1 &  -1 &  1\end{bmatrix}^\top$ is the
stoichiometric vector of that reaction ($S_1$ and $S_2$ lose one molecule and
$S_3$ gains one) while $R(t)$ is the number of times the reaction takes place
between time 0 and $t$. It is intuitive, and has been justified on physical
ground \cite{gillespie07, higham}, that the probability of the reaction
occurring between time $t$ and $t + dt$ is proportional to $X_1(t)$, $X_2(t)$
and $dt$ which suggests the model~\cite{anderson,kurtz}
\begin{eqnarray}
R(t) = Y\left(\int_0^t c \, X_1(s) X_2(s)\, ds\right),  \label{firstp}
\end{eqnarray}
where $c$ a proportionality constant and $Y$ is a unit-rate Poisson process: $Y(0) = 0$, $Y$ has independent increments, and $Y(t+s) - Y(s)$ has a 
Poisson distribution with parameter $t$ for all $t,s \ge 0$, i.e.,
$\mathbb{P}\big(Y(t+s) - Y(s) =n\big) = e^{-t} t^n / n!$. 

More generally, the evolution of a system with $N$ species and $M$ reactions is
governed by the propensity functions $a_j$, $j=1,\dots, M$, where  $a_j(\mathbf
X(t))\, dt$ represents the probability that the $j$th reaction occurs during
the time interval $[t, t+dt)$. For instance, in the case of (\ref{baby}), the
propensity function is $a(\mathbf X(t)) = c\, X_1(t)X_2(t)$. The resulting
evolution equation, often referred to as the {\em random time change
representation} (RTC) \cite{anderson,ahlw,kurtz,AndersonHigham12},  is then 
\begin{eqnarray}
\mathbf X(t) = \mathbf X(0) + \sum_{j=1}^M \boldsymbol\nu_j Y_j\left(\int_0^t a_j(\mathbf X(s))\, ds \right), \label{rtc}
\end{eqnarray}
where $\boldsymbol\nu_j$ is the stoichiometric vector of the $j$th reaction and the $Y_j$'s are independent unit-rate Poisson processes. 
The Law of Mass Action \cite{anderson} leads to  the propensity functions for
the main three types of reactions: 
\begin{align}
   &\phantom{S_m +\,\, } S_m  &\to& \quad\mbox{something } 
   &\Rightarrow& \quad a_j(\mathbf X(t)) = c_j  X_m(t), 
   \label{first} \\
   &S_m + S_n &\to& \quad\mbox{something } 
   &\Rightarrow& \quad a_j(\mathbf X(t)) = c_j  X_m(t) X_n(t) \quad\text{if } m\ne n, 
   \label{second}\\
   &S_m + S_m &\to& \quad\mbox{something } 
   &\Rightarrow& \quad a_j(\mathbf X(t)) = c_j  \frac 12 X_m(t) (X_m(t)-1).  
   \label{dimerization}
\end{align}
The reactions~\eqref{first},~\eqref{second}, and~\eqref{dimerization} 
are known as first order, second order, and dimerization reactions, respectively.
The form of the propensity functions for other common 
reaction types can be found, for example, in~\cite{gillespie76}.

\subsection{The thermodynamic limit}
In our analysis, we consider the limiting behavior of chemical systems as the system size approaches infinity. For example, as the system size increases, the likelihood of a particular reaction to fire may change, in the event that certain molecules must interact. To this end, we aim to update the propensity functions by introducing a system
size parameter $V$ given by the product of the system volume and the Avogadro 
number $n_A$.
As is common in the study of chemical systems, 
we write the stoichiometric vectors as follows: 
\[
   \boldsymbol\nu_j = \boldsymbol\nu_j' - \boldsymbol\nu_j'', \quad j = 1, \ldots, M,
\]
where the entries of $\boldsymbol\nu_j'$ and $\boldsymbol\nu_j''$ are 
the number of molecules of system species that are created and consumed
in the $j$th reaction, respectively.
Following the notation of~\cite{WangRathinam16}, we define
the $V$-dependent propensity functions as follows:
\[
a_j^V(\mathbf x) = \frac{k_j}{V^{\|\nu_j''\|^-1}} \prod_{i=1}^N \binom{x_i}{\nu_{ij}''},
\quad j = 1, \ldots, M,
\]
where $k_j$'s are reaction rate constants.
The $V$-dependent system trajectory is described by the RTC representation,
\begin{equation}\label{rtcV}
\mathbf X^V(t) = V \mathbf x_0 + 
    \sum_{j=1}^M \boldsymbol\nu_j
       Y_j\left(\int_0^t a_j^V(\mathbf X^V(s))\, ds \right). 
\end{equation}
Here we have let $\mathbf X^V(0) = V \mathbf x_0$ where $\mathbf x_0 \in
\mathbb{R}^N_{\geq 0}$ is a fixed vector. Throughout we will work with a
sequence of $V$ values such that $V \mathbf{x}_0$ is in $\mathbb{Z}^N_{\geq 0}$. 
Ensuring existence of such a sequence requires some assumptions on
$\mathbf{x}_0$ and the nominal (initial) system volume.  Specifically, in our study of
limiting behavior of systems, we may assume that the system's nominal volume
$\mathcal{V}_\text{nom}$ and $\mathbf{x}_0$ are such that $V_\text{nom}
\mathbf{x}_0 = \mathcal{V}_\text{nom}n_A \mathbf{x}_0$ is a vector in
$\mathbb{Z}^N_{\geq 0}$.  We then consider a sequence of system sizes given by
$V_m = m V_\text{nom}$, $m = 1, 2, \ldots$. 

Notice that the RTC formulations~\eqref{rtcV} is a restatement of~\eqref{rtc},
except with the dependence on system size made precise. For instance, 
considering the system at its nominal volume  
$\mathcal{V}_\text{nom}$, $\mathbf{X}(0)$ in~\eqref{rtc} 
is given by 
\[
\mathbf X(0) = \mathbf X^{V_\text{nom}}(0) =  
V_\text{nom} \mathbf{x}_0 = \mathcal{V}_\text{nom}n_A \mathbf{x}_0.
\]

Next, we define the limiting 
propensity functions~\cite{WangRathinam16},
\[
    \bar{a}_j(\mathbf x) = \lim_{V \to \infty} a_j^V(V \mathbf x) / V, 
    \quad j = 1, \ldots, M. 
\]
For example, if the $j$th reaction is as in~\eqref{second},
\[
    a_j^V(\mathbf x) = \frac{k_j}{V} x_m x_n
    \quad \text{and} \quad
    \bar{a}_j(\mathbf x) = k_j x_m x_n 
\]
One the other hand, if the $j$th reaction is of the form~\eqref{dimerization}, 
\[
     a_j^V(\mathbf x) = \frac{k_j}{2V} x_m(x_m  - 1) 
    \quad \text{and} \quad
    \bar{a}_j(\mathbf x) = \frac12k_j x_m^2.
\]
To describe the thermodynamic limit, we consider the concentration-based
state vector $\mathbf Z^V(t) = \mathbf{X}^V / V$. In the limit as 
$V \to \infty$, $\mathbf Z^V(t)$ approaches, almost surely, to a deterministic
function $\mathbf{Z}(t)$ that is obtained by solving a system of ODEs
known as the system of reaction rate equations (RREs). The theoretical
result underpinning this is given in~\cite[Theorem 2.1 in Chapter 11]{kurtz}. 
Below, we follow the 
form of this result as presented in~\cite{WangRathinam16}.
We also point the reader to~\cite[Chapter 2]{ting}, 
for a detailed exposition of this result.

The concentration
vector $\mathbf Z^V$ follows the RTC representation~\cite{WangRathinam16},
\begin{equation} \label{rtc_conc}
\mathbf Z^V(t) = \mathbf{x}_0 + \sum_{j=1}^M \boldsymbol{\nu}_j
  V^{-1} Y_j\left( \int_0^t a_j^V(V \mathbf Z^V(s))ds \right).
\end{equation}
The corresponding system of RREs is described by
\begin{equation}\label{equ:RREs}
\begin{aligned}
\frac{d\mathbf Z}{dt} &= F(\mathbf Z(t)) \quad t \in [0, T],\\ 
\mathbf Z(0) &= \mathbf {x}_0,
\end{aligned}
\end{equation}
where $F(\mathbf z) =  \sum_{j=1}^M \, \boldsymbol\nu_j\bar{a}_j(\mathbf z)$
and $[0, T]$ is the maximal interval of existence of solution
for~\eqref{equ:RREs}.
The result given in~\cite[Theorem 2.1 in Chapter 11]{kurtz} 
(see also~\cite{WangRathinam16}),
which covers more general classes of Markov processes, states that
if for all compact $K \subset \R^N$
\begin{equation}\label{equ:conds}
\begin{aligned}
&\sum_{j=1}^M  \| \boldsymbol\nu_j \| \sup_{\mathbf z \in K} \bar a_j(\mathbf z) < \infty,
\quad \text{and} 
\\
&F \text{ is Lipschitz on } K,
\end{aligned}
\end{equation}
then 
\begin{equation}
\label{kurtz}
\lim _{V\to \infty} \sup _{s \le T} \| \mathbf Z^V(s) - \mathbf Z(s)\| = 0 \quad \mbox{almost surely}. 
\end{equation}
Therefore, we know that in the limit, as $V \to \infty$, the stochastic
solutions obtained from~\eqref{rtc_conc} will converge almost surely to the
solution of the ODE system~\eqref{equ:RREs}. Note also that 
both of the conditions in~\eqref{equ:conds} hold for the chemical systems under study,
because $\bar{a}_j$'s are polynomials.

\section{The Next Reaction Method}

Several algorithms have been developed for simulating the dynamics of a
stochastic chemical reaction network; these include 
Gillespie's stochastic simulation
algorithm (SSA) \cite{higham, gillespie07} as well as  the Next Reaction Method
(NRM) of Gibson and Bruck \cite{gibson00} and its variants
\cite{anderson2007modified,le2015variance,navarro2016global}.  The NRM
approach has a number of advantages over the SSA, see  \cite[Section
1]{anderson2007modified} and \cite[Section 3.B]{rathinam2010efficient}, among
others: (i) it is  cheaper to simulate than the SSA in terms of random 
numbers generated 
per iteration; and (ii) it has the ability to handle time-dependent propensity
functions and reactions that exhibit delays between initiation and completion.
The variant of the NRM that we  use below is developed by Anderson in
\cite{anderson2007modified}, where it is referred to as the \textit{modified
next reaction method}. 

\renewcommand{\algorithmicrequire}{\textbf{Input:}}
\renewcommand{\algorithmicensure}{\textbf{Output:}}
\begin{algorithm}[h]
\caption{Modified Next Reaction Method~\cite{anderson2007modified}.}\label{alg:NRM}
\begin{algorithmic}[1]

\REQUIRE Initial state $\mathbf X_0$, final simulation time $T$, stoichiometric 
matrix $\boldsymbol\nu$, and propensity functions,
$\{a_j(\cdot)\}_{j=1}^M$.
\ENSURE A realization of $\mathbf X(t, \omega)$.

\STATE \verb+% initialization %+ 
\FOR{$j = 1, \dots, M$}
\STATE Generate random number $r_j \sim U(0, 1)$
\STATE $\tau_j = 0, ~\tau_j^+ = -\ln (r_j)$ 
\ENDFOR
\STATE $t = 0, ~\textbf{X}(0) = \mathbf X_0$
\STATE \verb+% simulation loop %+
\WHILE{$t < T$}
\FOR{$j = 1, \dots, M$}
\STATE Evaluate $a_j(\textbf{X}(t))$ and $\Delta t_j = \frac{\tau_j^+ - \tau_j}{a_j(\textbf{X}(t))}$
      
\ENDFOR
\STATE Set $l = \underset{j}{\mathrm{argmin}}\{ \Delta t_j \}_{j=1}^M$
\STATE $\textbf{X}(t+\Delta t_l) \leftarrow \textbf{X}(t) + \boldsymbol{\nu}_l$ \hfill\COMMENT{Update state vector}
\STATE $t \leftarrow t + \Delta t_l$ \hfill\COMMENT{Update global time}
\FOR{$j = 1, \dots, M$}
\STATE $\tau_j \leftarrow \tau_j + a_j \Delta t_l$ \hfill\COMMENT{Update internal times of each reaction}
\ENDFOR
\STATE Generate random number $r_l \sim U(0,1)$

\STATE $\tau_l^+ \leftarrow \tau_l^+ - \ln (r_l)$ \hfill\COMMENT{Update next reaction time for reaction $l$}
\ENDWHILE \\

\end{algorithmic}
\end{algorithm}

Following \cite{gibson00}, we define an internal time $\tau_j$, for each reaction as 
\begin{equation} \label{eq:int_time}
\tau_j(t) = \int_0^t a_j(\textbf{X}(s)) \, ds, \qquad j=1,\dots, M.
\end{equation}
The NRM simulates RTC dynamics by treating each reaction as an independent stochastic process:
from \eqref{eq:int_time}, one can see that \eqref{rtc} is a linear combination of Poisson processes with different internal times  $\tau_j$, $j=1, \dots, M$. The approach is then to track the firing of each reaction in terms of these  internal times. Given the ``current" internal time $\tau_j$, $j=1,\dots, M$,  we denote by $\tau_j^+$ the internal time at which reaction $j$ fires next. At each iteration, the vectors 
$\begin{bmatrix} \tau_1 & \tau_2 & \cdots & \tau_M\end{bmatrix}^\top$ and
$\begin{bmatrix} 
\tau_1^+ & \tau_2^+\cdots & \tau_M^+\end{bmatrix}^\top$ 
store the current internal time and the next internal time for each reaction. 
Given these two vectors, 

one can determine
how much physical or global time will elapse before reaction $j$ 
fires again by considering
\[
\Delta t_j =  \frac{\tau_j^+ - \tau_j}{a_j(\textbf{X}(t))},
\quad 
j=1,\dots, M. 
\]
This is a direct consequence of \eqref{eq:int_time} and the assumption that
$a_j$ remains constant in the interval $[t, t + \Delta t)$ with $\Delta t =
\max_j \Delta t_j$. The index of the next reaction to fire is then $l =
\text{argmin}(\Delta t_j)$, from which the system state and propensities may be
updated and the global time incremented by  $\Delta t_l$. The next internal
time for reaction $l$ to fire is then computed as $\tau_l^+ = \tau_l^+ + \xi$,
where $\xi$ represents the duration between events in a Poisson 
process; the latter implies $\xi$ is exponentially distributed. Each $\tau_j$ where $j \neq l$,
corresponding to an internal time that has not reached firing, is given the
approximate update, $\tau_j = \tau_j + a_j \Delta t_l$, which is discussed in
detail in \cite[Section 4]{anderson2007modified}. An outline of the full NRM
algorithm for a general reaction network is given in Algorithm \ref{alg:NRM}.

\section{Global sensitivity analysis for stochastic models}
\label{sec:gsa}

In this section, we study convergence of sensitivity indices corresponding to
stochastic models to their deterministic counterparts.  
In Section~\ref{sec:setup}, we describe the underlying 
probabilistic setup and global sensitivity analysis via Sobol' indices.  
In
Section~\ref{sec:conv_basic}, we present a generic result regarding convergence
of the Sobol' indices of a family of random processes. Then, in
Section~\ref{sec:conv_chemkin}, we show how the generic convergence result can
be applied to stochastic chemical systems.

\subsection{The basic setup}\label{sec:setup}
Stochastic models with uncertain parameters present two sources of
uncertainties: intrinsic uncertainty due to stochasticity of the system and
uncertainty in model parameters. 

We denote the probability space carrying intrinsic stochasticity of the
system by $(\Omega, \mathcal F, \nu)$, where $\Omega$ is the sample
space equipped with a sigma-algebra $\mathcal{F}$ and a probability measure
$\nu$. 
In stochastic chemical systems, the uncertain model parameters of interest are
the reaction rates constants, $k_1, \ldots, k_M$. We model 
these as independent uniformly distributed random variables. 
Following common practice, we parameterize the uncertainty in $k_i$'s 
using a random vector $\boldsymbol\theta =
\left[\theta_1, \ldots, \theta_M\right]^\top$ whose entries are independent
$U(-1, 1)$ random variables. For example, if $k_i \sim U(a_i, b_i)$, 
then $k_i(\theta_i) = \frac12(a_i+b_i) +
\frac12(b_i-a_i)\theta_i$.

The uncertain parameter vector $\boldsymbol\theta$ takes values in $\Theta =
[-1, 1]^M$. It is convenient to work with the probability space
$(\Theta, \mathcal E,\lambda)$ for the uncertain parameters, where $\mathcal E$
is the Borel sigma-algebra on $\Theta$ and $\lambda$ is the law of
$\boldsymbol\theta$, $\lambda(d\boldsymbol\theta) = 2^{-M} d\boldsymbol\theta$.
The present setup can be easily extended to cases where $\theta_i$'s are 
independent random variables belonging to other suitably chosen distributions.
Note also that one can have additional uncertain parameters in a chemical system.

We use Sobol' indices~\cite{sobol,sobol93,saltelli2010} 
to characterize the sensitivity of a quantity of interest (QoI) to input parameter
uncertainties. For example, let $f(\boldsymbol\theta)$ be 
a scalar-valued QoI defined in terms
of the solution of the RREs corresponding to a chemical system. The first order
Sobol' indices corresponding to
$f(\boldsymbol\theta)$ are 
\begin{equation} \label{equ:det_sobol}
S_j(f) := \frac{\mathbb{V}[\mathbb{E}[f(\boldsymbol\theta) ~|~ \theta_j]]}
               {\mathbb{V}[f]}, \quad j = 1, \ldots, M.
\end{equation}
These indices quantify the proportion of the QoI variance due to the $j$th
input parameter. Here $\mathbb{E}[f(\boldsymbol\theta) ~|~
\theta_j]$ indicates conditional expectation and $\mathbb{V}[f]$ denotes the variance
of $f$. 
For further details on theory and computation methods for Sobol' indices
we refer the readers to~\cite{sobol,sobol93,saltelli2010,ralph}.

\subsection{Convergence of stochastic Sobol' indices}\label{sec:conv_basic}
We consider a family of stochastic processes 
$\{f_V(\boldsymbol\theta, \omega)\}_{V > 0}$ with 
\[
     f_V(\boldsymbol\theta, \omega) : \Theta \times \Omega \to \R,
\]
which, as discussed below,  are assumed to admit a deterministic limit as $V \to \infty$. 
The Sobol' indices corresponding to $f_V(\boldsymbol\theta, \omega)$ are
\begin{equation} \label{equ:stoch_sobol}
S_j(f_V(\cdot, \omega)) := \frac{ \mathbb{V}[\mathbb{E}[f_V(\boldsymbol\theta, \omega) ~|~ \theta_j]]  }{ \mathbb{V}[f_V(\boldsymbol\theta, \omega)]}, \quad j = 1, \ldots, M.
\end{equation}
The following result concerns the convergence of these 
indices in the limit as 
$V \to \infty$.

\begin{theorem}\label{thm:conv}
Assume
\begin{enumerate}
\item There exists
$f \in L^2(\Theta, \mathcal E,\lambda)$ such that,
for almost all $\omega \in \Omega$, 
\begin{equation}\label{equ:cond_one}
f_V(\boldsymbol\theta, \omega) \to f(\boldsymbol\theta), \quad \text{as }  V \to \infty,
\quad \text{for all } \boldsymbol\theta \in \Theta.
\end{equation}

\item For almost all $\omega \in \Omega$, $f_V(\boldsymbol\theta, \cdot)$ is 
$\mathcal{E}$-measurable and there 
exists $\varphi_{\omega}(\boldsymbol\theta) \in  L^2(\Theta, \mathcal E, \lambda)$ such
that for all $\boldsymbol\theta \in \Theta$,
\begin{equation}\label{equ:cond_two}
|f_V(\boldsymbol\theta, \omega)| \leq \varphi_{\omega}(\boldsymbol\theta), \quad \text{for all }
V > 0.
\end{equation}
\end{enumerate}
Then the stochastic Sobol' indices satisfy, 
\[
S_j(f_V(\cdot, \omega)) \to S_j(f), \quad 
\text{as } V \to \infty,\quad \nu\text{-almost surely}.
\]
\end{theorem}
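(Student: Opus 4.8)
The plan is to reduce the whole statement to a single assertion of $L^2(\Theta,\mathcal E,\lambda)$ convergence and then exploit continuity of the two operations (variance and conditional expectation) that build the Sobol' index. First I would fix an $\omega$ in the intersection of the two full-$\nu$-measure sets on which \eqref{equ:cond_one} and \eqref{equ:cond_two} hold; this intersection still has full measure, so it suffices to prove convergence of $S_j(f_V(\cdot,\omega))$ for each such $\omega$. For this fixed $\omega$ I would establish that $f_V(\cdot,\omega)\to f$ in $L^2(\Theta,\mathcal E,\lambda)$. Indeed, $f_V(\cdot,\omega)\to f$ pointwise by \eqref{equ:cond_one}, and since $\varphi_\omega\in L^2$ and $f\in L^2$, the bound \eqref{equ:cond_two} gives the integrable dominating function $|f_V(\cdot,\omega)-f|^2\le 2\varphi_\omega^2+2f^2\in L^1(\Theta,\mathcal E,\lambda)$; the dominated convergence theorem then yields $\int_\Theta|f_V(\cdot,\omega)-f|^2\,d\lambda\to 0$.

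Next I would transfer this $L^2$ convergence through the two operations defining the index. Note that $\lambda$ is a probability measure, so the variance can be written as the continuous quadratic functional $\mathbb V[g]=\|g\|_{L^2}^2-\langle g,1\rangle_{L^2}^2$, which depends continuously on $g\in L^2$ (the map $g\mapsto\langle g,1\rangle$ is bounded since $1\in L^2$). The conditional-expectation operator $P_j g:=\mathbb E[g\mid\theta_j]$ is the orthogonal projection onto the closed subspace of $\theta_j$-measurable square-integrable functions, hence a contraction, $\|P_j g-P_j h\|_{L^2}\le\|g-h\|_{L^2}$. Consequently $P_j f_V(\cdot,\omega)\to P_j f$ in $L^2$, and applying continuity of the variance to numerator and denominator gives
\[
\mathbb V\big[\mathbb E[f_V(\cdot,\omega)\mid\theta_j]\big]\longrightarrow \mathbb V\big[\mathbb E[f\mid\theta_j]\big]
\qquad\text{and}\qquad
\mathbb V\big[f_V(\cdot,\omega)\big]\longrightarrow \mathbb V[f].
\]

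Finally, assuming $\mathbb V[f]>0$ (so that $S_j(f)$ is well defined), the denominators $\mathbb V[f_V(\cdot,\omega)]$ are eventually bounded away from zero, so the ratio \eqref{equ:stoch_sobol} converges to \eqref{equ:det_sobol}, i.e.\ $S_j(f_V(\cdot,\omega))\to S_j(f)$, for every $\omega$ in the full-measure set fixed at the outset. This is precisely the claimed $\nu$-almost sure convergence.

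The step I expect to be the main obstacle is the passage through the conditional expectation: one must argue that $L^2$ convergence of $f_V(\cdot,\omega)$ is inherited by the conditional expectations $\mathbb E[f_V(\cdot,\omega)\mid\theta_j]$, which is exactly where the contraction (orthogonal-projection) property of $P_j$ is needed, and one must track the $\omega$-dependence carefully so that the dominating function $\varphi_\omega$ and the limit $f$ are all used on one common full-measure event.
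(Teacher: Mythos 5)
Your proposal is correct and follows essentially the same route as the paper's proof: fix a common full-measure set of $\omega$, obtain $L^2(\Theta,\mathcal E,\lambda)$ convergence of $f_V(\cdot,\omega)$ to $f$ via dominated convergence, pass this through the conditional expectation using its $L^2$-contraction property (the paper phrases this via Jensen's inequality and the reverse triangle inequality), deduce convergence of both variances, and take the ratio. Your explicit remark that $\mathbb V[f]>0$ is needed for the ratio to converge is a point the paper leaves implicit, but otherwise the two arguments coincide.
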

\begin{proof}
By the assumptions of the theorem, there exists
a set $F \in \mathcal{F}$ with $\nu(F) = 1$ such that the
conditions~\eqref{equ:cond_one} and~\eqref{equ:cond_two} hold 
for every $\omega \in F$. By~\eqref{equ:cond_two},
we observe that $f_V(\boldsymbol\theta, \omega) \in L^2(\Theta, \mathcal{E}, \lambda)$, 
for every $\omega \in F$ and $V > 0$. Thus,
we can define the Stochastic Sobol' indices~\eqref{equ:stoch_sobol} 
for $\{f_V(\cdot, \omega)\}_{V > 0}$, for every $\omega \in F$. 

To show that
$f_V(\boldsymbol\theta, \omega) \to f(\boldsymbol\theta)$ in $L^2(\Theta, \mathcal{E}, \lambda)$, we note that for every $\omega \in F$
$|f_V(\boldsymbol\theta, \omega) - f(\boldsymbol\theta)|^2 \to 0$ pointwise in $\Theta$ and 
\[
|f_V(\boldsymbol\theta, \omega) - f(\boldsymbol\theta)|^2 \leq 
4\,\varphi_{\omega}(\boldsymbol\theta)^2 \in L^1(\Theta, \mathcal{E}, \lambda).
\]
Therefore, invoking the Lebesgue Dominated Convergence Theorem, 
we have that for all $\omega \in F$, 
$\int_{\Theta} |f_V(\boldsymbol\theta, \omega) - f(\boldsymbol\theta)|^2 \lambda(d\boldsymbol\theta) \to 0$
and thus for every $\omega \in F$
\begin{equation*}\label{equ:moments} 
\lim_{V \to \infty} \int_\Theta 
[f_V(\boldsymbol\theta, \omega)]^r \lambda(d\boldsymbol\theta) = 
\int_{\Theta} [f(\boldsymbol\theta)]^r
\lambda(d\boldsymbol\theta), \quad r = 1,2.
\end{equation*}
The convergence of the first and second moments 
of $f_V(\cdot, \omega)$ clearly implies 
\begin{equation*}\label{equ:var_conv}
\lim_{V \to \infty}
\mathbb{V}(f_V(\cdot, \omega))
= \mathbb{V}(f(\cdot)), \quad \text{for all } \omega \in F.
\end{equation*}
To finish the proof of the theorem,  
we need to show 
\begin{equation*}\label{equ:conv_cond}
   \lim_{V \to \infty} \mathbb{V}\{ \mathbb{E}(f_V(\cdot, \omega) | \theta_j)\}
   = \mathbb{V}\{ \mathbb{E}(f(\cdot) | \theta_j)\}, \quad 
   \text{for all } \omega \in F, \, j = 1, \ldots, M.
\end{equation*}
Using the reverse triangle inequality and Jensen's inequality we observe
\begin{equation*}\label{equ:inequalities}
\begin{aligned}
\left|\| \mathbb{E}(f_V(\cdot, \omega) | \theta_j) \|_{L^2(\Theta)}
- 
\| \mathbb{E}(f(\cdot) | \theta_j) \|_{L^2(\Theta)}
\right|
&\leq 
\| \mathbb{E}(f_V(\cdot, \omega) | \theta_j) - 
\mathbb{E}(f(\cdot) | \theta_j)
\|_{L^2(\Theta)}\\
&= 
\| \mathbb{E}(f_V(\cdot, \omega) - f(\cdot)| \theta_j) 
\|_{L^2(\Theta)}\\
&\leq 
\| f_V(\cdot, \omega) - f(\cdot)\|_{L^2(\Theta)},
\end{aligned}
\end{equation*} 
and thus, for all $\omega \in F$
\[
\lim_{V\to\infty} \|\mathbb{E}(f_V(\cdot, \omega) | \theta_j) \|_{L^2(\Theta)} = 
\|\mathbb{E}(f(\cdot) | \theta_j) \|_{L^2(\Theta)}.
\]
Since 
\begin{equation*}\label{equ:tower}
\begin{aligned}
\mathbb{V}\{ \mathbb{E}(f_V(\cdot, \omega) | \theta_j)\} &= 
\mathbb{E}\{ \mathbb{E}(f_V(\cdot, \omega) | \theta_j)^2 \}
- \mathbb{E}\{ \mathbb{E}(f_V(\cdot, \omega) | \theta_j) \}^2 
\\
&=\| \mathbb{E}(f_V(\cdot, \omega) | \theta_j) \|_{L^2(\Theta)}^2 
- \mathbb{E}\{ f_V(\cdot, \omega) \}^2,
\end{aligned}
\end{equation*}
we have, for all $\omega \in F$, 
\begin{equation}\label{equ:cond_exp_cov}
\lim_{V\to\infty}
\mathbb{V}\{ \mathbb{E}(f_V(\cdot, \omega) | \theta_j)\}
= \|\mathbb{E}(f(\cdot) | \theta_j) \|_{L^2(\Theta)} - \mathbb{E}\{ f(\cdot) \}^2
= 
\mathbb{V}\{ \mathbb{E}(f(\cdot) | \theta_j)\}.
\end{equation}
This, along with the convergence of the (unconditional) variance implies
\[
\lim_{V \to \infty}
S_j(f_V(\cdot, \omega)) 
= 
\lim_{V \to \infty}
\frac{ \mathbb{V}\{\mathbb{E}(f_V(\boldsymbol\theta, \omega) | \theta_j)\}  }
{ \mathbb{V}\{f_V(\boldsymbol\theta, \omega)\}}
=
\frac{ \mathbb{V}\{\mathbb{E}(f | \theta_j)\}  }
{ \mathbb{V}\{f\}}
= S_j(f), 
\] for all $\omega \in F$, $j = 1, \ldots, M$.
\end{proof}

\begin{rem}\label{rmk:SU}
A slight modification of the proof of
Theorem~\ref{thm:conv} 
leads to a more general result: namely, we can obtain 
almost sure convergence of 
the indices, 
\begin{equation}\label{equ:stoch_sobolU} 
S_U(f_V(\cdot, \omega)) := \frac{ \mathbb{V}[\mathbb{E}[f_V(\boldsymbol\theta, \omega) ~|~ 
\boldsymbol\theta_U]]  }{ \mathbb{V}[f_V(\boldsymbol\theta, \omega)]  },
\end{equation}
where 
$U = \{j_1, j_2, \ldots, j_s\} \subseteq \{1, 2, \ldots, M\}$ and $\boldsymbol\theta_U = 
\begin{bmatrix} \theta_{j_1} & \theta_{j_2} & \cdots & \theta_{j_s}\end{bmatrix}^\top$,
to $S_U(f(\cdot))$. 
\end{rem}

We recall the \emph{total} Sobol' indices~\cite{saltelli2010}, 
\begin{equation}\label{equ:stoch_soboltot}
T_j(f_V(\cdot, \omega)) := 
\sum_{U \ni j} S_U(f_V(\cdot, \omega)),
\quad
j = 1, \ldots, M.
\end{equation}
These indices quantify the relative contribution 
of $\theta_j$ by itself, and through its interactions 
with the other coordinates of of $\boldsymbol\theta$, 
to the variance of $f_V(\cdot, \omega)$. In view of 
Remark~\ref{rmk:SU}, under 
the conditions of Theorem~\ref{thm:conv} 
\[
    \lim_{V \to \infty} T_j(f_V(\cdot, \omega)) = T_j(f(\cdot)), 
    \quad \text{for almost all } \omega \in \Omega, \, j = 1, \ldots, M. 
\]

\subsection{Application to stochastic chemical kinetics}
\label{sec:conv_chemkin}

Consider the (concentration based) 
state vector $\mathbf Z^V(t, \boldsymbol\theta, \omega)$ of a stochastic 
chemical system and its deterministic counterpart $\mathbf Z(t, \boldsymbol\theta)$,
corresponding the thermodynamic limit.
Recall that $\boldsymbol\theta \in \Theta$ parameterizes the uncertainty in 
reaction rate constants.
In the present work, 
we focus on a scalar time-independent QoI
$G(\mathbf Z^V(t, \boldsymbol\theta, \omega))$ and  
its deterministic counterpart $G(\mathbf Z(t, \boldsymbol\theta))$.
Specifically, $G$ takes a vector function $\mathbf z(t)$ and returns a 
scalar QoI. 
Examples include 
\begin{subequations} \label{eq:qoi_examples}
\begin{align}
     G(\mathbf z(t)) &= z_i(t^*), \quad \text{for  fixed } t^* \in [0, T] \text{ and } i \in \{1,\ldots,N\}, \quad \text{or} 
     \\
     G(\mathbf z(t)) &= \frac1T \int_0^T z_i(t) \, dt
     \quad \text{for a fixed } i \in \{1,\ldots,N\}. 
\end{align}
\end{subequations}
In general, we assume 
$G:L^\infty([0, T]; \mathbb{R}^N) \to \R$ to be a continuous function. Note that
$L^\infty([0, T]; \mathbb{R}^N)$ is equipped with norm
$\| \cdot \|_\infty$ given by $\| \mathbf{z} \|_\infty = \sup_{t \in [0, T]} \|z_i(t)\|_2$. 

To put things in the
notation of the previous subsection, we consider
\[
   f_V(\boldsymbol\theta, \omega) = G(\mathbf Z^V(t, \boldsymbol\theta, \omega)),
   \quad \boldsymbol\theta \in \Theta, \omega \in \Omega,
\]
and the corresponding limiting (deterministic) quantity, $f(\boldsymbol\theta)
=  G(\mathbf Z(t, \boldsymbol\theta))$.
Note that by~\eqref{kurtz}, for fixed $\boldsymbol \theta \in \Theta$, as $V \to \infty$
\[
\|\mathbf Z^V(\cdot, \boldsymbol\theta, \omega) - \mathbf Z(\cdot, \boldsymbol\theta)\|_\infty
\to 0, \quad \text{for almost all } \omega \in \Omega.
\]
Therefore, by the Continuous Mapping Theorem, see e.g., \cite{durrett}, 
for each $\boldsymbol \theta \in \Theta$, 
\begin{equation}\label{equ:conv_kurtz}
f_V(\boldsymbol\theta, \omega) \to f(\boldsymbol\theta), \quad \text{almost surely}, 
\end{equation}
as $V \to \infty$.
We consider the convergence of the stochastic Sobol' indices 
$S_j(f_V(\cdot, \omega))$ to their deterministic counterparts 
$S_j(f(\cdot))$, $j = 1, \ldots, M$, as $V \to \infty$, i.e., in the
thermodynamic limit. 
Here we discuss how things can be put in the framework of Theorem~\ref{thm:conv}, 
which would then imply almost sure convergence of the stochastic Sobol' indices
to their limiting deterministic counterparts. 

Theorem~\ref{thm:conv} requires existence of a set of full measure in $\Omega$ such that
the convergence in~\eqref{equ:conv_kurtz} holds. To ensure this, we consider a modification 
of $f_V(\boldsymbol\theta, \omega)$ as follows. We know that for each $\boldsymbol\theta \in 
\Theta$, there exists a set of full measure $F_\theta \subseteq \Omega$ for which
the convergence~\eqref{equ:conv_kurtz} holds. Define 
\[
     \tilde{f}_V(\boldsymbol\theta, \omega) = \begin{cases}
     f_V(\boldsymbol\theta, \omega) \quad \text{ if } \omega \in F_\theta,\\
     f(\boldsymbol\theta) \quad \text{otherwise}.
     \end{cases}
\]
Note that, we have $\nu\left( \{ \omega \in \Omega : \tilde{f}_V(\boldsymbol\theta, \cdot) =  
f_V(\boldsymbol\theta, \omega)
\}\right) = 1$, 
for every $\boldsymbol\theta \in \Theta$. That is $\tilde{f}_V(\boldsymbol\theta, \cdot)$
is a modification of $f_V(\boldsymbol\theta, \cdot)$. 
Note that this modification satisfies the following: for every $\omega \in \Omega$,
$\tilde{f}_V(\boldsymbol\theta, \omega) \to f(\boldsymbol\theta)$ for all $\boldsymbol 
\theta \in \Theta$.
With a slight abuse of notation,
we will denote this modification by $f_V(\boldsymbol\theta, \omega)$ from this point on.
To ensure that Theorem~\ref{thm:conv} applies, we need also the boundedness 
assumption~\eqref{equ:cond_two}. 

To discuss the boundedness assumption \eqref{equ:cond_two}, we take 
a step back and first discuss conditions ensuring boundedness of 
the stochastic system trajectory $\{\mathbf Z^V(t,
\boldsymbol\theta, \omega)\}_{V > 0}$. 
Consider the state vector $\mathbf{X}^V(t)$.
Non-negativity of
this state vector requires the propensity functions
to be proper~\cite{rathinam2015moment}: for $j = 1, \ldots, M$,
we assume  
for all $\mathbf{x} \in \mathbb{Z}_+^N$, if $\mathbf{x} +
\boldsymbol\nu_j \notin \mathbb{Z}_+^N$, then $a_j^V(\mathbf{x}) = 0$. 
Boundedness of components of $\mathbf X^V(t)$ requires further (mild)
assumptions, as formalized in~\cite[Theorem 2.8 and 2.11]{rathinam2015moment}.
Interestingly, the only requirements concern the stoichiometric matrix
$\boldsymbol \nu$. Namely, assuming the existence of 
a vector $\boldsymbol \alpha \in \mathbb{Z}^N_{\geq 0}$ such that 
$\boldsymbol \alpha^\top \boldsymbol \nu \leq 0$ and 
$\alpha_i > 0$ is necessary and sufficient for boundedness of 
$X_i^V(t)$. Specifically, if such $\boldsymbol\alpha$ exists, 
$\boldsymbol \alpha^\top \mathbf{X}^V(t) = \boldsymbol\alpha^\top
(\mathbf{X}^V(0) + \boldsymbol\nu \mathbf{R}(t)) \leq 
\boldsymbol\alpha^\top \mathbf{X}^V(0)$. Therefore, 
\[
X_i^V(t) \leq (1/\alpha_i) \boldsymbol\alpha^\top \mathbf{X}^V(0)
=(V/\alpha_i) \boldsymbol\alpha^\top\mathbf{x}_0.
\]
Thus, in terms of concentrations 
\[
    Z^V_i(t) = X_i^V / V \leq (1/\alpha_i) \boldsymbol\alpha^\top\mathbf{x}_0. 
\]

Therefore, 
we have that the $i$th component of $\mathbf{Z}^V$ remains uniformly bounded by 
$(1/\alpha_j) \boldsymbol\alpha^\top \mathbf{x}_0$. Moreover, this bound 
is independent of the reaction rate constants, i.e., independent of $\boldsymbol\theta$.
Thus, if a vector $\boldsymbol\alpha$ satisfying the aforementioned properties
exists for all the components of the state vector, then the concentration based
state vector $\mathbf Z^V$ remains uniformly bounded by a constant.   
In fact, we need to only ensure boundedness of the components of
$\mathbf Z^V$ that appear in definition of $G$. 
Given the function $G$,
which defines the QoI, is sufficiently well-behaved, one may argue that $f_V$
inherits the boundedness necessary to satisfy \eqref{equ:cond_two}. For
example, if $G$ is defined as in \eqref{eq:qoi_examples}, then establishing
boundedness of $\{Z_i^V(t, \boldsymbol\theta, \omega)\}_{V > 0}$ is
sufficient to satisfy \eqref{equ:cond_two} for the QoI, $f_V$.

\section{Numerical results}
\label{sec:numerical_results}
In light of the convergence properties exhibited by stochastic chemical
reaction systems, we aim to demonstrate numerically the results of
Theorem~\ref{thm:conv}. Convergence results will be presented first for the
Michaelis--Menten reaction system and then for a higher-dimensional example
arising from the study of genetic networks. Attention will also be devoted to the
computation of Sobol' indices and the random sampling necessary to compute the
stochastic Sobol' indices introduced in Section~\ref{sec:gsa}.

\subsection{The  Michaelis--Menten system}
The Michaelis--Menten reaction is the most well-known example of enzymatic catalysis in the chemical kinetics literature\cite{higham,anderson,le2015variance}: 
\begin{align} \label{mm_react}
  S + E &\xrightarrow{k_1} C \nonumber \\ 
    C &\xrightarrow{k_2} S + E \\
    C &\xrightarrow{k_3} P + E \nonumber 
\end{align}
In \eqref{mm_react}, the substrate $S$ binds to the enzyme $E$ to form the
complex $C$. The complex may either dissociate back into the substrate and
enzyme or dissociate into the enzyme and a product $P$. Figure
\ref{fig:mm_traject} depicts 25 realizations of the reaction dynamics using the
NRM algorithm with a final time of $T=50$. The parameters, corresponding to the
rate constants in the propensity functions, are fixed to the nominal values
$\bar{k}_1 = 10^6$, $\bar{k}_2 = 10^{-4}$, and $\bar{k}_3 = 0.1$ provided in~\cite{wilkinson}. Figure
\ref{fig:mm_traject} depicts concentrations of each species for a system size of
$V_\text{nom} =  n_A \mathcal{V}_\text{nom}$, where the nominal volume of the reaction
system is $\mathcal{V}_\text{nom} = 10^{-15}\,\si{m^3}$.
\begin{figure}[!htb]
\centering
\includegraphics[width=.8\textwidth]{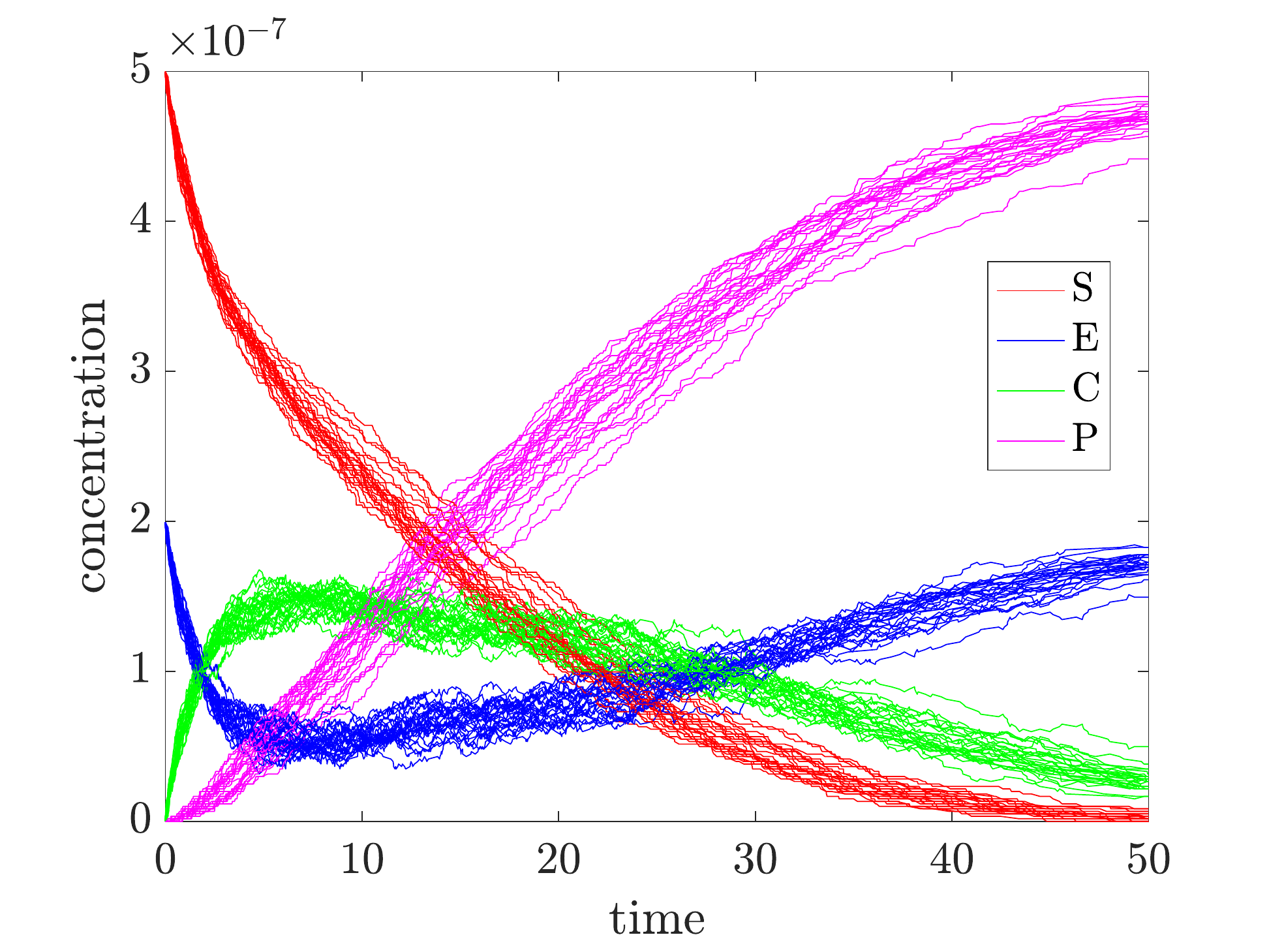}
\caption{25 realizations of  Michaelis-Menten trajectories computed via NRM with nominal parameters, varying $\omega$.} \label{fig:mm_traject}
\end{figure}

In Figure~\ref{fig:trajectconv_omega}~we illustrate convergence of the RTC
trajectories to the RRE trajectories as the system size increases. We hold the
parameters fixed to their nominal values and plot 25 realizations of the
product $P^V(t, \omega) = Z^V_4(t, \omega)$ along with the corresponding RRE 
trajectory. As the system
size increases, the ensemble of RTC trajectories converge to the RRE trajectory. In
Figure~\ref{fig:trajectconv_omega}, the quantity $m$ denotes the multiplicative factor
by which the system size is varied. For the purpose of the simulation, $m$ is
related to the system size by the relation $V = m \cdotp V_\text{nom}$.
\begin{figure}[!htb]
    \centering
    \includegraphics[trim= 30 20 20 23,width=.75\textwidth]{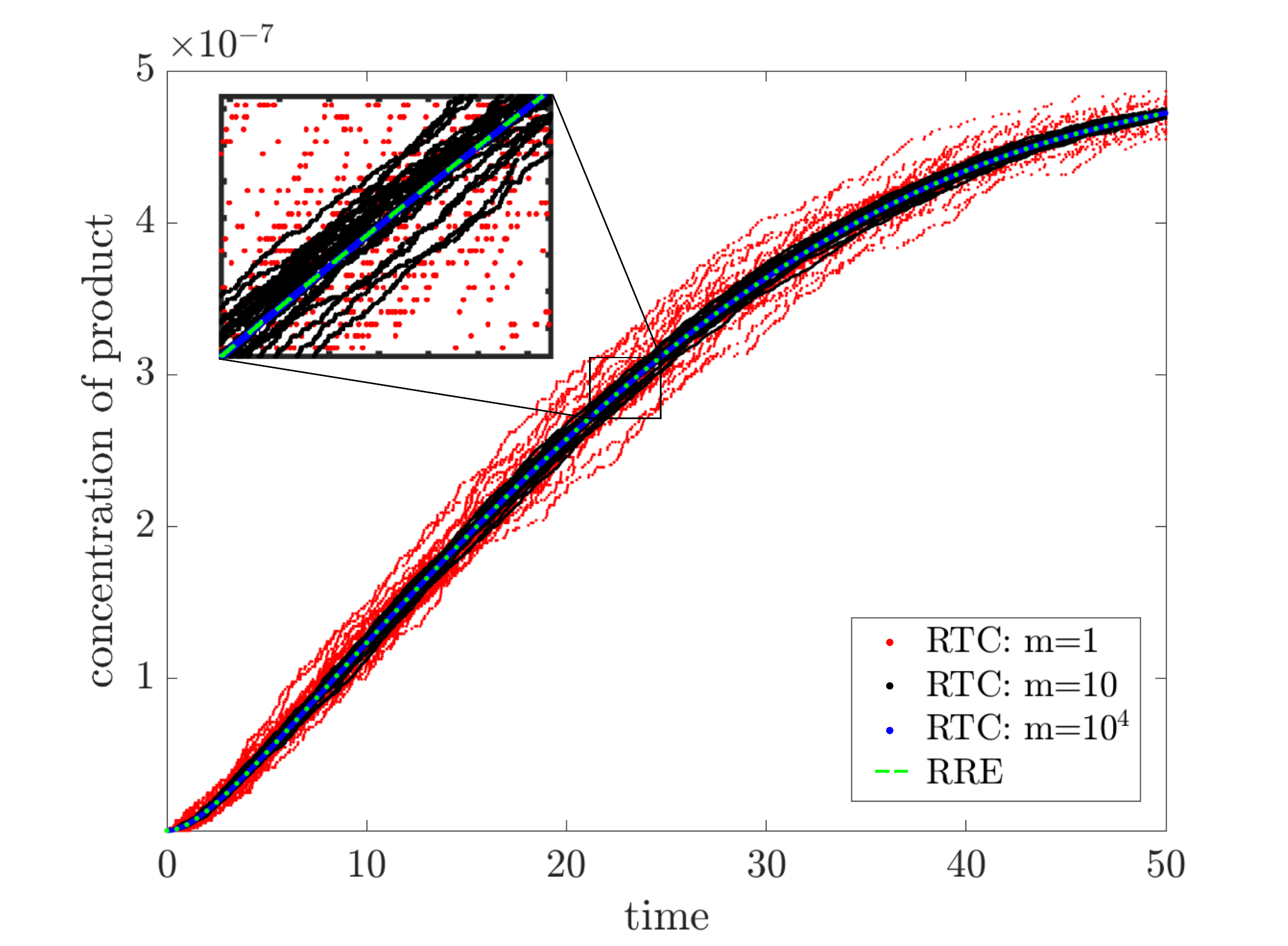}%
    \caption{Convergence of the product $P^V(t, \omega)$ the 
corresponding RRE solution at the nominal 
parameter values plotted as system size grows. 
}%
\label{fig:trajectconv_omega}%
\end{figure}

\subsubsection{The QoI}
In the present study we focus on the stochastic QoI 
\[
 f_V(\boldsymbol\theta, \omega) = \frac{1}{T}\int_0^T  Z^V_4(t;  \boldsymbol\theta, \omega ) \,dt, 
\]
where $\mathbf{Z}^V$ is
the solution of the RTC.  The corresponding deterministic QoI is 
\[
 f(\boldsymbol\theta) = \frac{1}{T}\int_0^T Z_4(t; \boldsymbol\theta)\,dt,
\]
where $\mathbf{Z}$ is computed by solving the accompanying
RRE. To get a sense of the statistical properties of the QoI, we sample $f_V$
and $f$ over the uncertain parameter domain given by $\Theta = [-1, 1]^3$, 
and with the uncertain rate constants defined as 
\[
k_i(\theta_i) = \bar{k}_i + (0.1 \bar{k}_i) \theta_i, \quad i = 1, 2, 3,
\]
where $\bar{k}_i$'s are the nominal reaction rate constants as defined above.

Figure \ref{fig:sample_qoi_pdf} shows PDFs of $f$ sampled in $\Theta$,
$f_V$ sampled in $\Theta \times \Omega$, and $f_V$ sampled in $\Omega$ while
using nominal parameters. 

All samples of $f_V$ used in Figure \ref{fig:sample_qoi_pdf} use the $V =
V_\text{nom}$.

\begin{figure} [!htb]
    \centering
    \subfloat{{\includegraphics[trim= 20 5 20 7,clip,width=.7\textwidth]{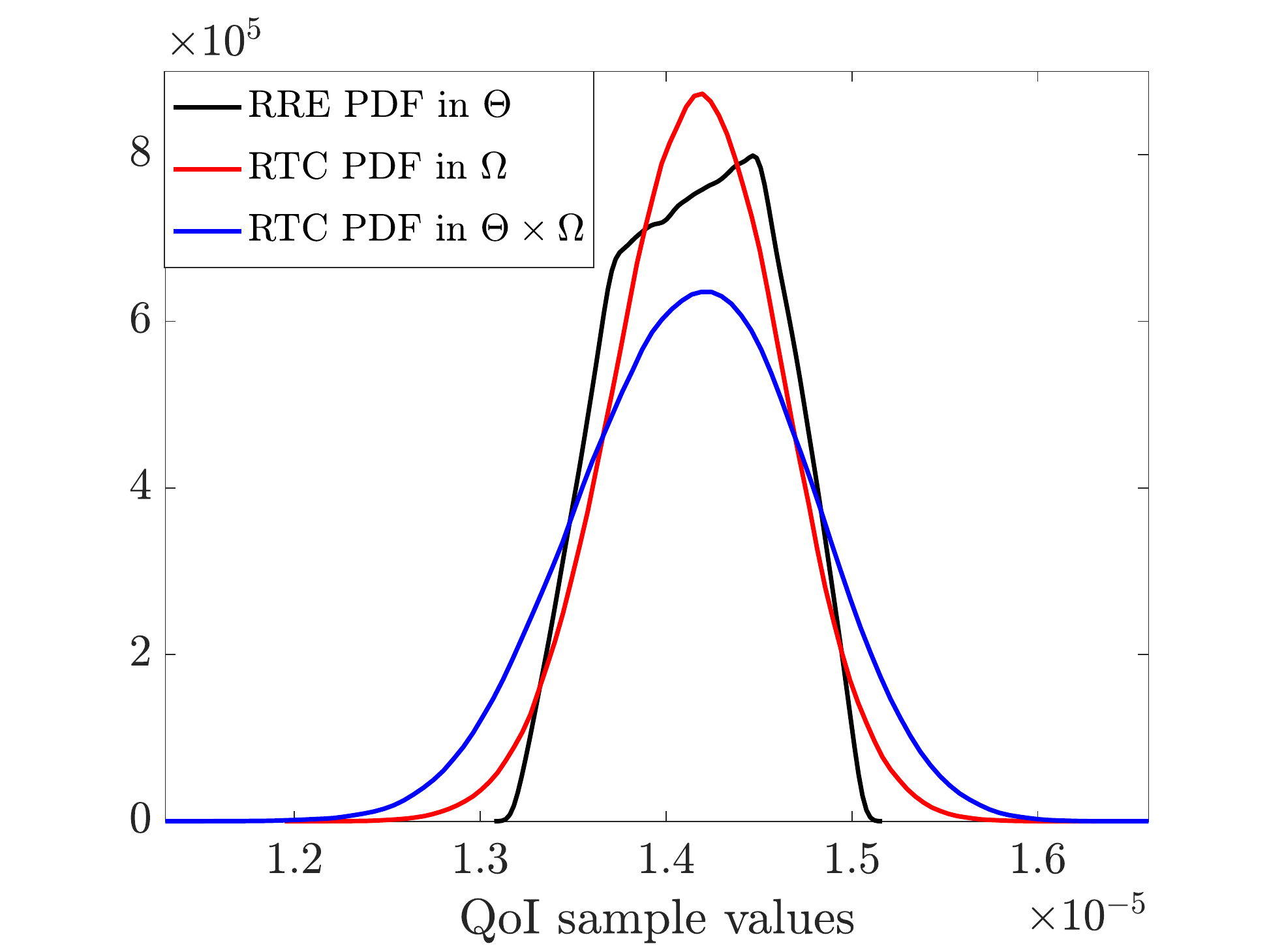} }}
    \caption{Estimated PDFs of $f_V$ sampled over $\Omega$ and $\Theta \times \Omega$ and $f$ sampled over $\Theta$, respectively.}%
    \label{fig:sample_qoi_pdf}%
\end{figure}

\subsubsection{Global sensitivity analysis}

In this section, we turn to estimating Sobol' indices in both the stochastic
and deterministic setting.  For the purpose of this study, we focus on the
computation and convergence of the total Sobol' indices. The method detailed
below can be applied to Sobol' indices of any order. 

Sobol' indices measure the relative contribution of a subset of uncertain
parameters to the variance of some QoI. Consequently, it is natural to consider
QoIs which are deterministic functions of these uncertain parameters, without
any additional variance contributed by a secondary source. When modeling
chemical systems using stochastic processes, such as the RTC, the model
parameters and internal stochasticity both provide sources of uncertainty,
which must be accounted for separately. We 
summarize the process of estimating Sobol' indices in the 
deterministic and stochastic cases in the Algorithm~\ref{alg:Sobol}, where the number of uncertain parameters is denoted $p$. Note, it is not always the case that $p
= M$, the number of reactions. 

\begin{algorithm}[h]
\caption{Sobol' indices for a chemical system with fixed system size.} 
\label{alg:Sobol}
\begin{algorithmic}[1]
\REQUIRE Method of evaluating $f_V(\boldsymbol\theta, \omega)$ and $f(\boldsymbol\theta)$, $N_s$: number of parameter samples, set of $M_s$ random seeds $\{ \xi_i \}_{i=1}^{M_s}$, system size $V$.
\ENSURE Total Sobol' indices: $\{ T_1^V(\omega_i), \dots, T_p^V(\omega_i)\}_{i=1}^{M_s}$ and $\{ T_1, \dots, T_p \}$.

\STATE Draw $N_s(p+2)$ samples uniformly in $\Theta$ \hfill
\COMMENT{see~\cite{ ralph} for details}
\STATE \verb+% stochastic indices %+ 
\FOR{$i = 1, \dots, M_s$}
\STATE Seed random number generator with $\xi_i$, corresponding to realization $\omega_i$
\FOR{$j = 1, \dots, N_s(p+2)$} 
\STATE Evaluate and store $f_V(\boldsymbol\theta_j, \omega_i)$ samples
\ENDFOR
\STATE Using $f_V$ samples, estimate Sobol' indices: $\{T_1^V(\omega_i), \dots, T_p^V(\omega_i) \}$
\ENDFOR

\STATE \verb+% deterministic indices %+
\FOR{$j = 1, \dots, N_s(p+2)$}
\STATE Evaluate and store $f(\boldsymbol\theta_j)$ samples
\ENDFOR
\STATE Using $f$ samples, estimate Sobol' indices: $\{T_1, \dots, T_p \}$
\end{algorithmic}
\end{algorithm}

In the stochastic setting, fixing a particular $\omega_i$ turns $f_V$ into a
deterministic function of the uncertain parameters. From that point, the
process of estimating Sobol' indices is identical to the deterministic case. We
estimate Sobol' indices using Monte Carlo integration, see~\cite{saltelli2010, ralph}
or~\cite[Section 4.5]{saltellibook} for details. In Algorithm~\ref{alg:Sobol}, the cost of estimating first order and total indices for each fixed $\omega_i$ is $N_s(p+2)$ evaluations of the QoI, where $N_s$ is user-defined.

The realizations of the stochastic indices
correspond to $\omega_i \in \Omega$, $i = 1, \ldots, M_s$, prescribed by the choice of
random seed. We also note that the stochastic indices are functions of the
given system size, while the deterministic indices do not depend on $V$ and
should not be recomputed each time $V$ is changed. For a fixed $V$, we may
compare the distribution of each $T_i^V$ with the deterministic value of $T_i$.

Returning to the Michaelis--Menten example, in Figure~\ref{fig:totalsob_pdfs}~we plot the PDFs of the stochastic total indices corresponding to the default $V$, where $m=1$. 
 \begin{figure} [!htb]
    \centering
    \includegraphics[trim= 35 20 35 20, clip,width=0.33\textwidth]{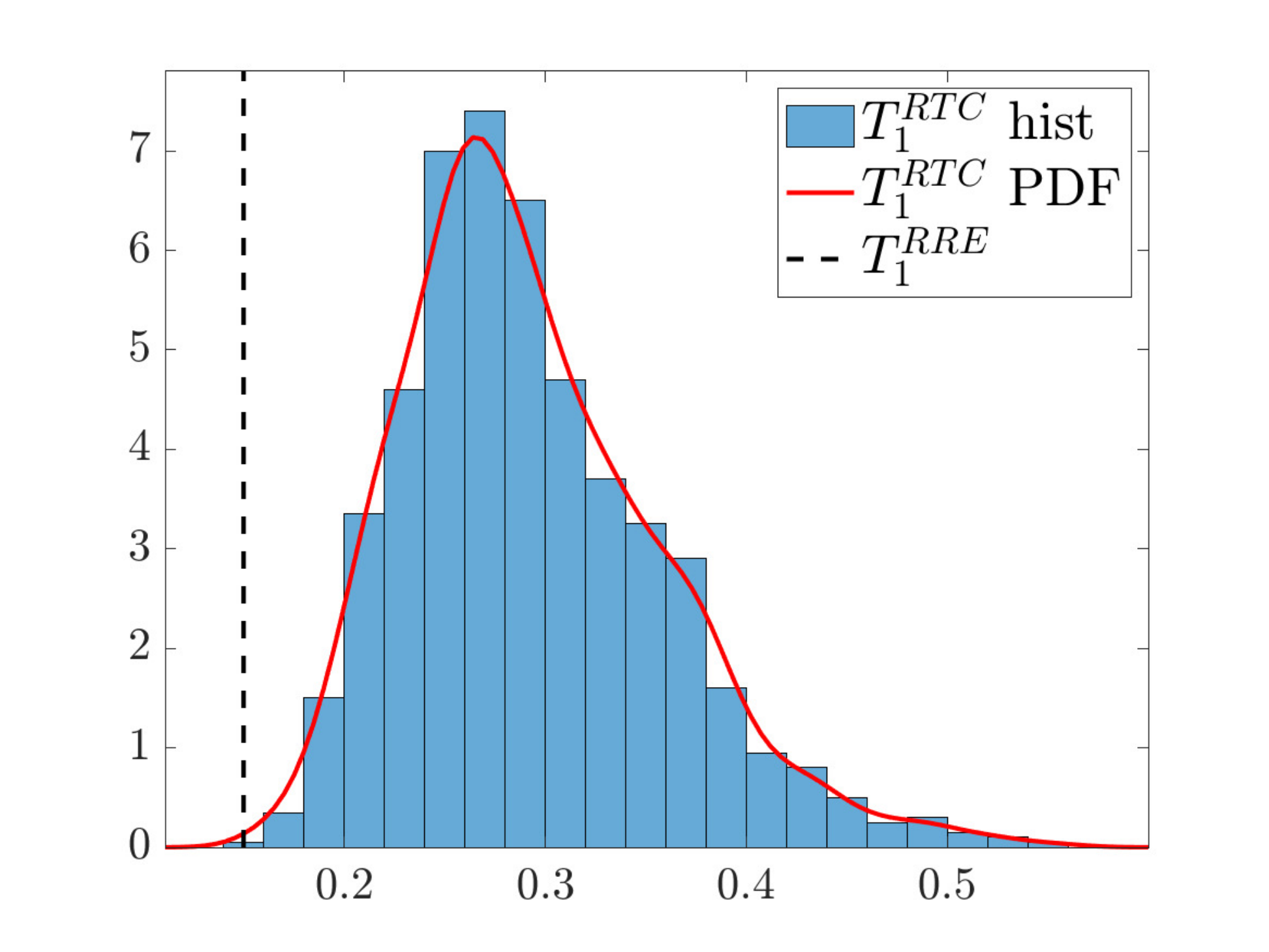}%
    \includegraphics[trim= 35 20 35 20, clip,width=0.33\textwidth]{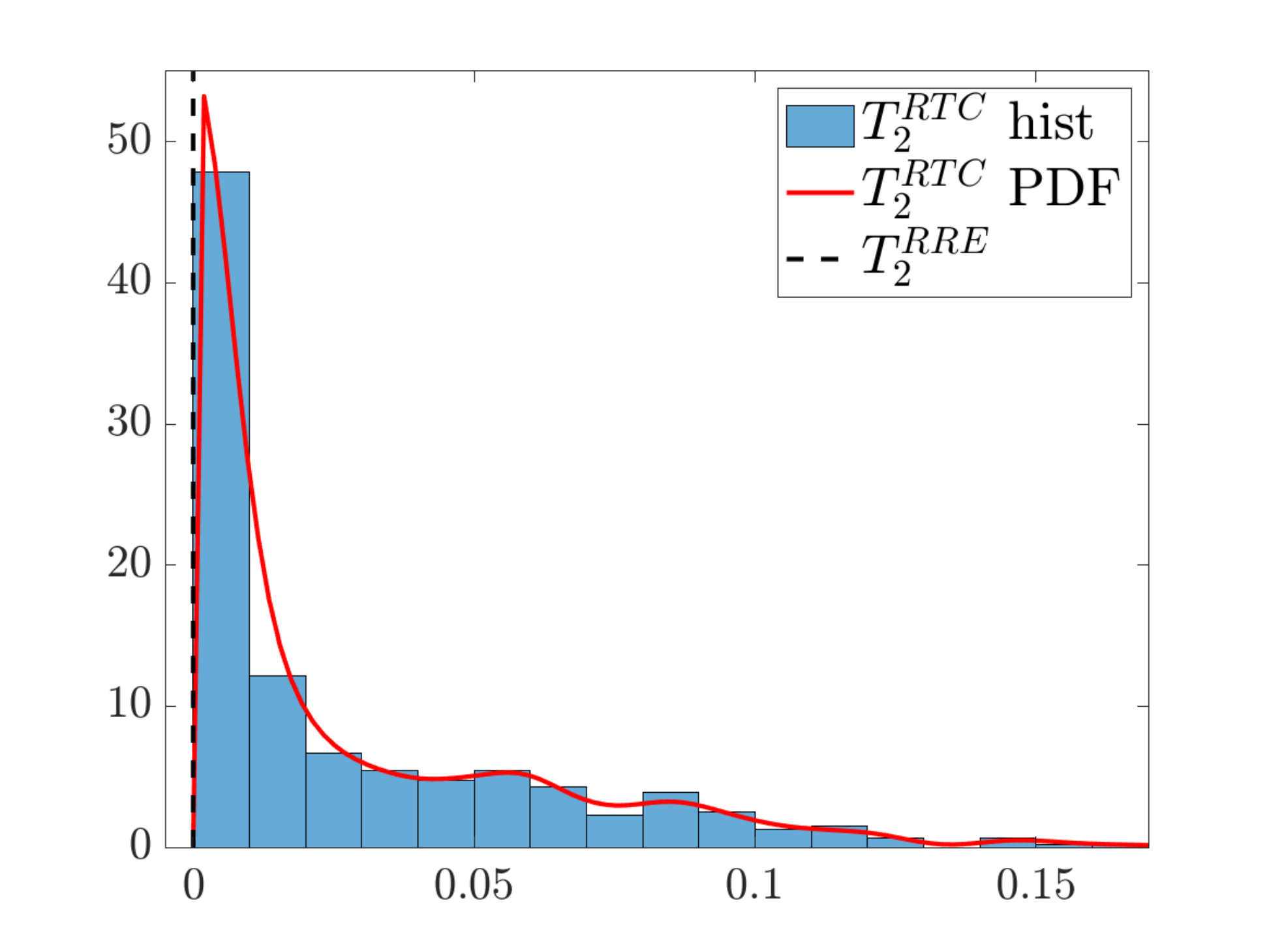} %
    \includegraphics[trim= 35 20 35 20, clip,width=0.33\textwidth]{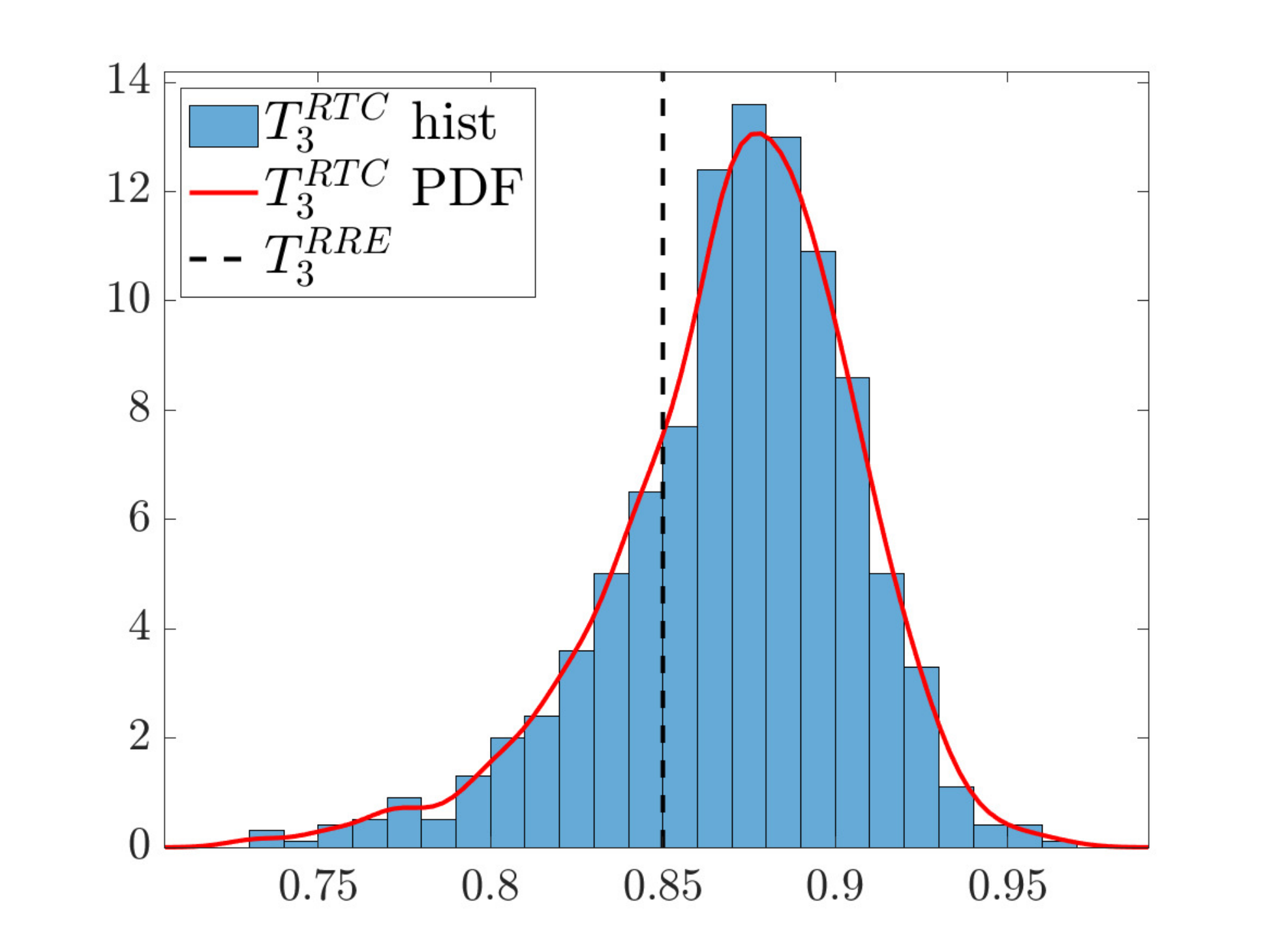}%
    \caption{Histogram and PDF estimates for the total Sobol' indices for $k_1, k_2$, and $k_3$, respectively. Black dashed lines indicate the deterministic value of the RRE total indices.}%
    \label{fig:totalsob_pdfs}%
\end{figure}
The deterministic indices, estimated with $N_s = 10^7$ samples, are $T_1
\approx 1.5 \times 10^{-1}, T_2 \approx 1.2 \times 10^{-7}$, and $T_3 \approx
8.5 \times 10^{-1}$, indicating that the third reaction, where the complex
dissociates into the enzyme and the product, is the most important and the
second reaction, where complex dissociates into the enzyme and substrate, is
the least important, contributing almost no variance. 

\subsubsection{Convergence of Sobol' indices} 
One may verify that the conditions on the QoI necessary for \ref{thm:conv} to hold are satisfied in the present case. Thus we demonstrate numerically the convergence of the stochastic Sobol' indices to the stated deterministic values. After we have computed multiple realizations of the stochastic indices at increasing, discrete values of $V$, we examine the evolution of their distribution as $V$ increases. 

 \begin{figure} [!htb]
    \centering
    \includegraphics[trim= 5 5 35 20, clip,width=0.34\textwidth]{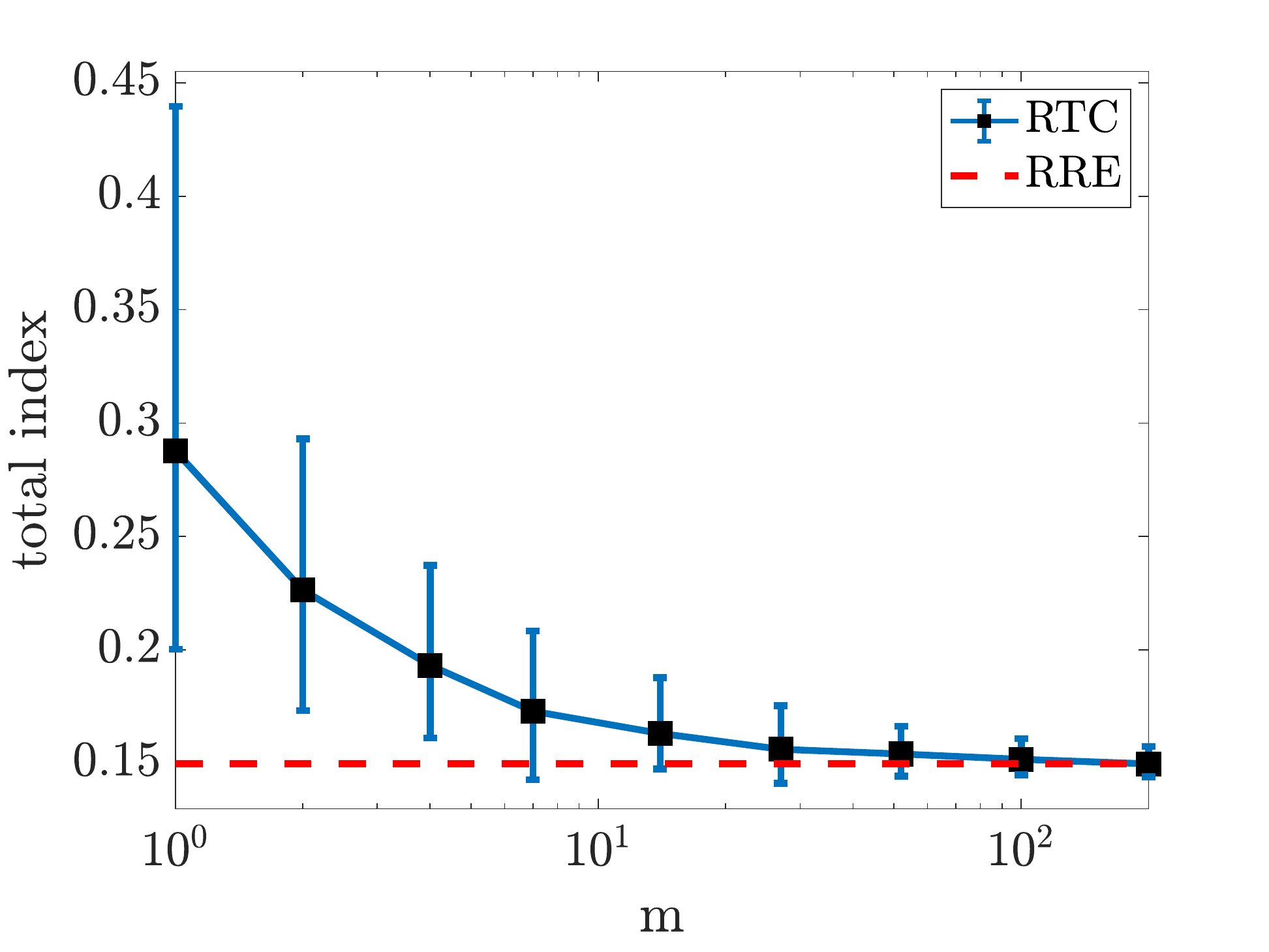}%
    \includegraphics[trim= 25 5 35 20, clip,width=0.325\textwidth]{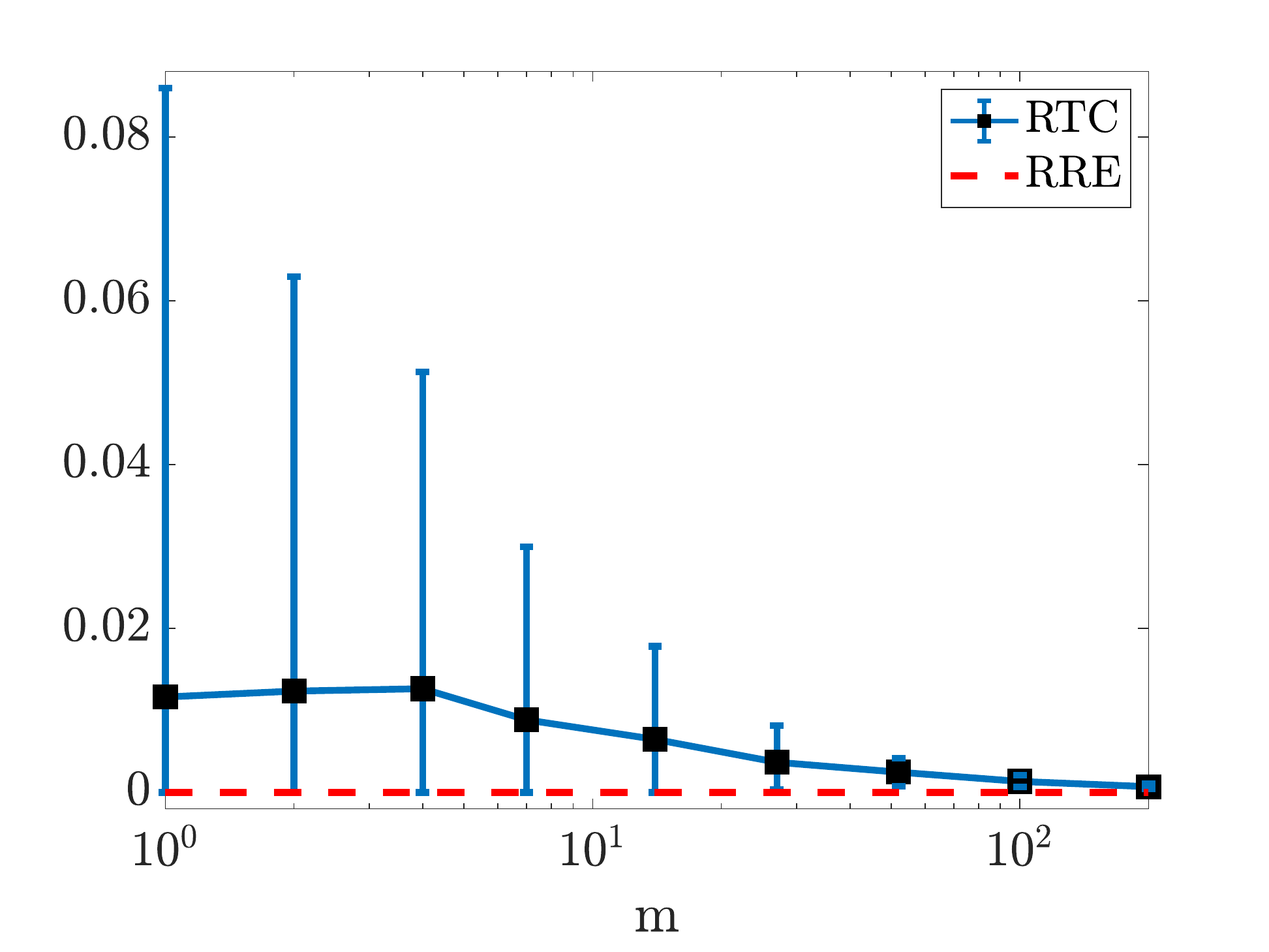} %
    \includegraphics[trim= 25 5 35 20, clip,width=0.325\textwidth]{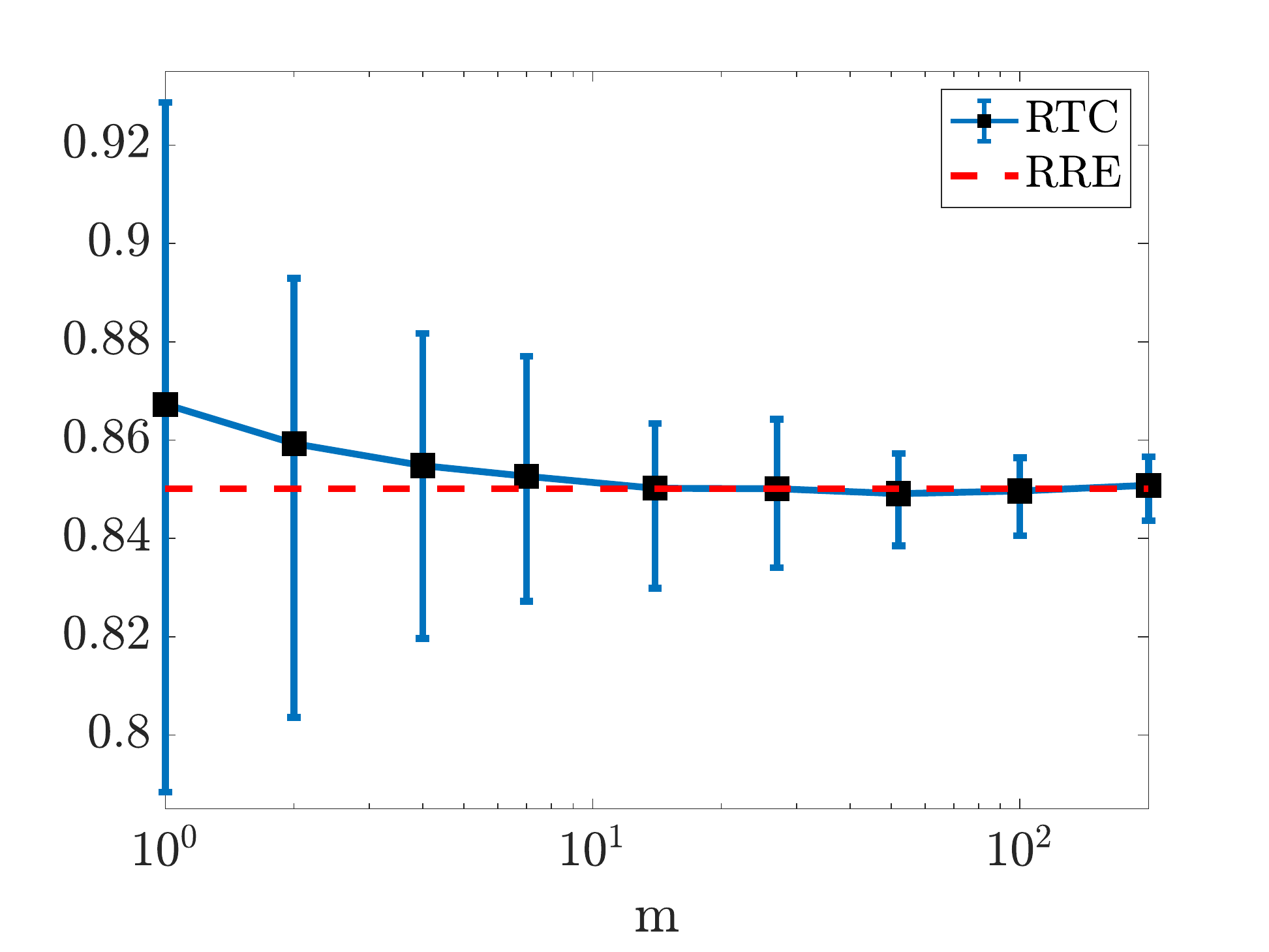}%
    \caption{Convergence of the mean total Sobol' index as a function of $V$ for parameters $k_1, k_2$, and $k_3$, respectively. Note the vertical axes of each figure are not over the same range. The lower and upper bounds of the error
bars indicate the $5$th and $95$th percentiles, respectively.}
    \label{fig:errorbar_conv}%
\end{figure}

Figure \ref{fig:errorbar_conv} demonstrates the convergence of
$\mathbb{E}[T_i^{V_m}(\omega)]$ for $i = 1, 2, 3$, 
for increasing values of system size $V_m = m V_\text{nom}$, 
$m = 1, \ldots, 200$.
The
error bars represent the $5$th and $95$th percentiles of the distribution of
stochastic indices at a particular system size. Figure \ref{fig:errorbar_conv}
suggests the convergence of the PDF for each $T_i^V(\omega)$ to a Dirac
distribution centered at the deterministic value of the Sobol' index
corresponding to the RRE. This sort of convergence may also be demonstrated for
lower order Sobol' indices, as addressed in Remark \ref{rmk:SU}.

Figure \ref{fig:histconv} gives a three-dimensional view of the convergence in Figure \ref{fig:errorbar_conv}. We plot a series of normalized histograms at specific values of $m$, converging to Dirac distributions centered at the RRE total indices. These histograms, even for two orders of magnitude difference in $V$, show a clear trend towards the limiting values given by the RRE. 

\begin{figure} [!htb]
    \centering
    \includegraphics[trim= 35 20 25 30, clip,width=0.49\textwidth]{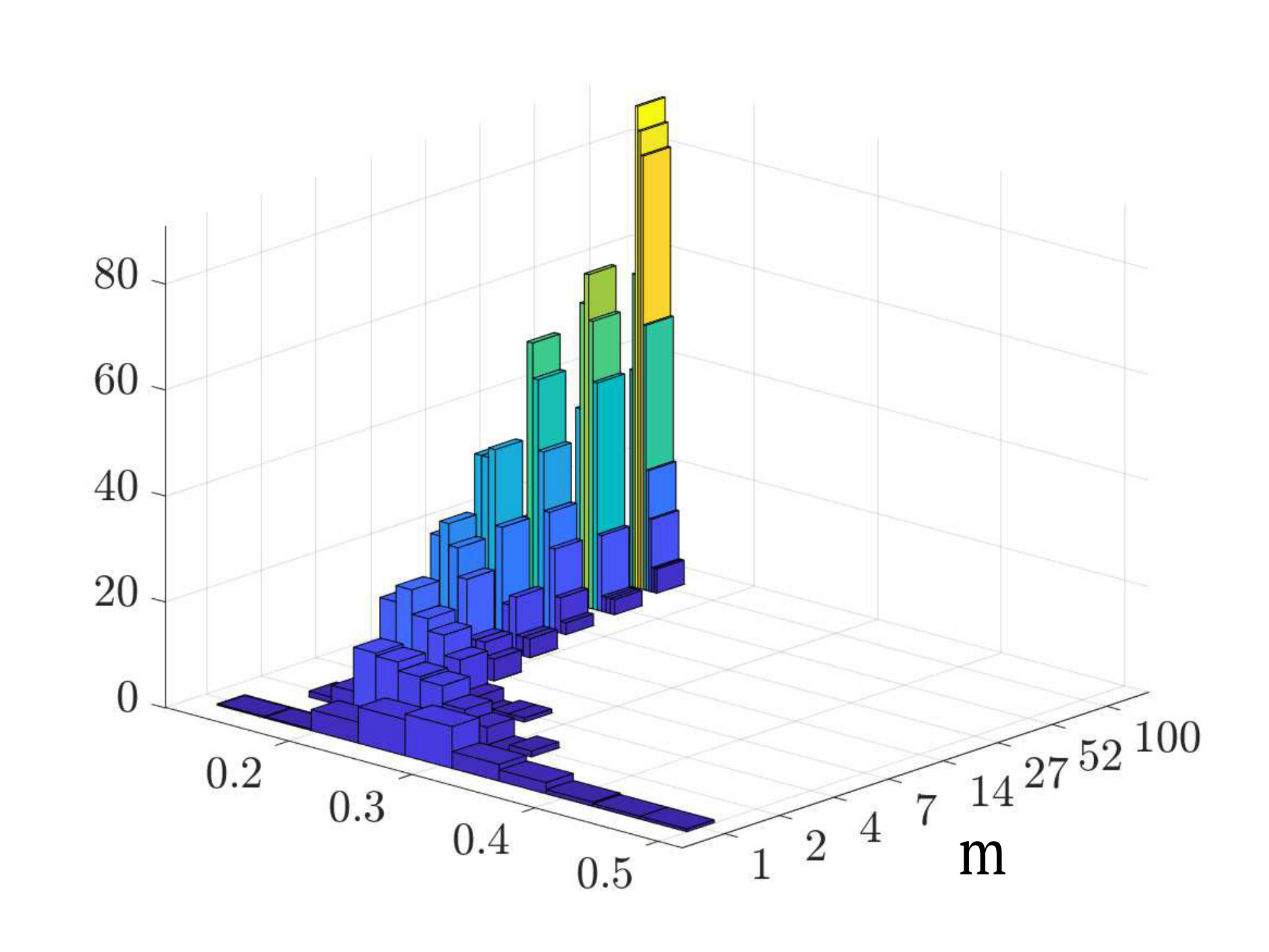} 
    \includegraphics[trim= 20 20 20 30, clip,width=0.49\textwidth]{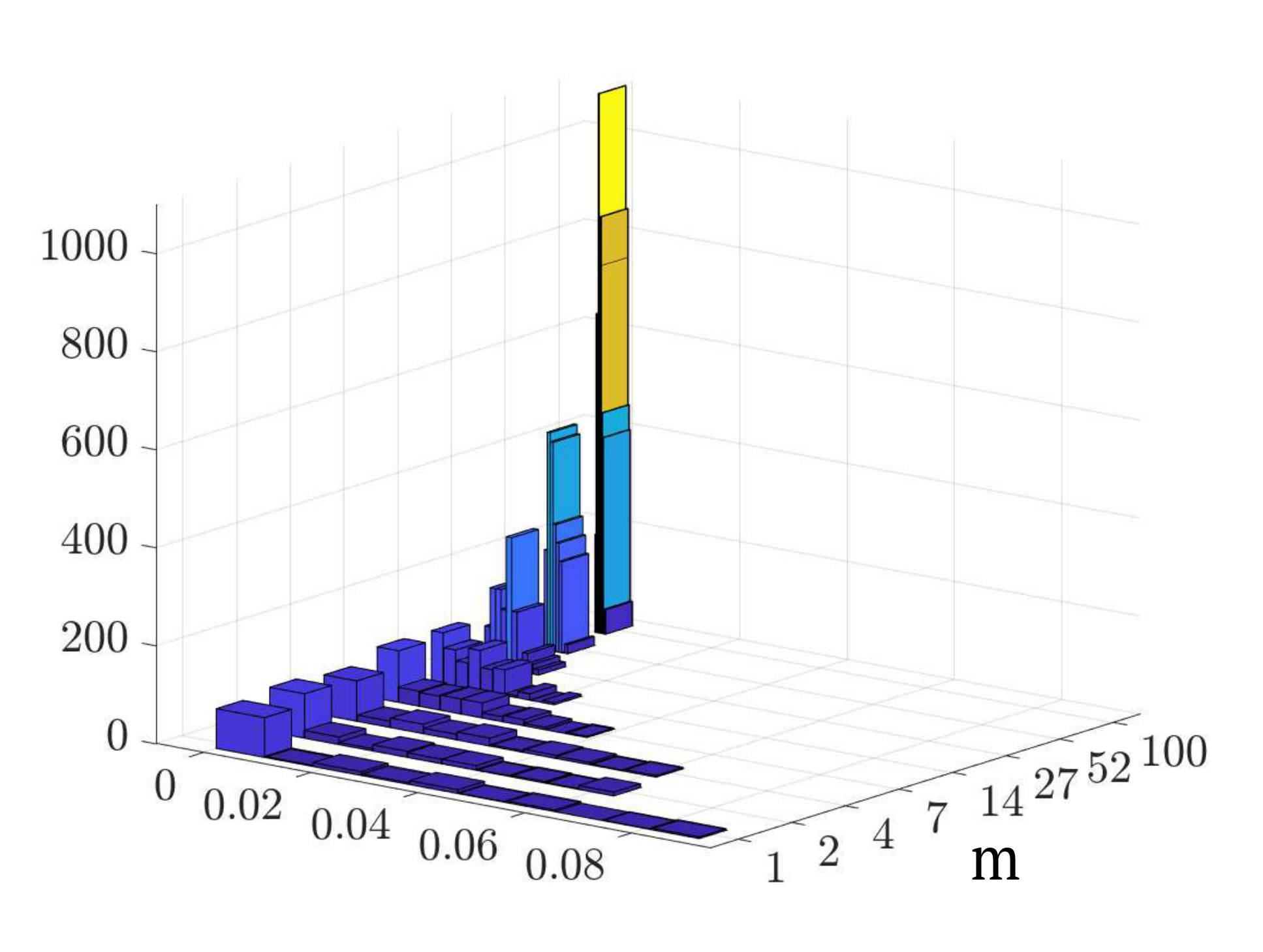} 
    \includegraphics[trim= 35 20 20 30, clip,width=0.49\textwidth]{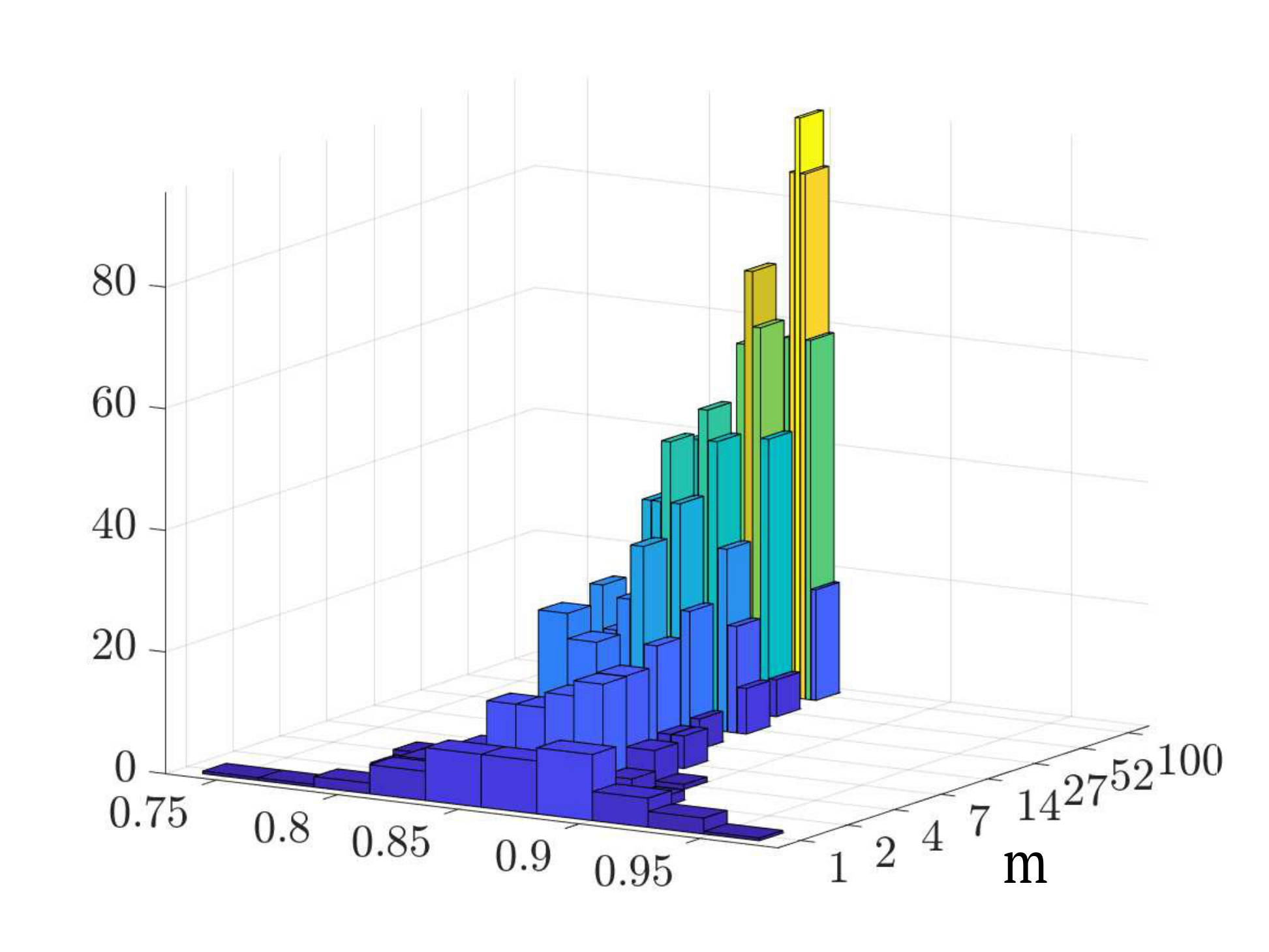}%
    \caption{Histograms at discrete $V$ values of the total Sobol' indices for $k_1, k_2$, and $k_3$, respectively. Note again that the distributions are not over the same range of values. }
    \label{fig:histconv}%
\end{figure}

Figures \ref{fig:errorbar_conv} and \ref{fig:histconv} can perhaps most
naturally be understood as illustrating the convergence in distribution of the RTC Sobol'
indices, an implication of the pointwise convergence of the PDF. In this case, $T_i^V(\omega)$ is the random
variable that converges in distribution for each $i = 1, 2, 3$ as $V$
approaches infinity. 

\subsection{The genetic oscillator system}
We next consider the genetic oscillator system presented in~\cite{vilar2002}, which models the evolution of activator and repressor proteins that govern the circadian
clocks of a wide variety of organisms. The system consists of nine species,
including genes, mRNAs, and the two proteins. We have $M = 16$ 
reactions and sixteen uncertain parameters. Following the form of the chemical system presented in~\cite{sheppard2012pathwise}, we provide the reaction diagrams, propensity functions, and nominal parameter values in Table~\ref{tab:genetic}.

\begin{table}
\label{tab:genetic}
\begin{minipage}{0.5\textwidth}
 \begin{tabular}[!htb]{||c c||}
 \hline
Reaction & Propensity Function \\ [0.5ex] 
 \hline
$P_a \to P_a + mRNA_a$ & $\alpha_AP_a$  \\ 
 $P_{a-}A \to P_{a-}A + mRNA_a$ & $\alpha_a\alpha_A P_{a-}A$  \\
 $P_r \to P_rmRNA_r$ & $\alpha_R P_r$  \\
 $P_{r-}A \to P_{r-}A + mRNA_r$  & $\alpha_r\alpha_R P_{r-}A$  \\
 $mRNA_a \to mRNA_a + A$  & $\beta_A mRNA_a$  \\
 $mRNA_r \to mRNA_r + R$  & $\beta_R mRNA_r$  \\
 $A + R \to C$  & $\gamma_C AR$  \\
 $P_a + A \to P_{a-}A$  & $\gamma_A P_a A$  \\
 $P_{a-}A \to P_a + A$  & $\theta_A P_{a-}A$  \\
 $P_r + A \to P_{r-}A$  & $\gamma_R P_r A$  \\
 $P_{r-}A \to P_r + A$  & $\theta_R P_{r-}A$  \\
 $A \to \emptyset$  & $\delta_A A$  \\
 $R \to \emptyset$  & $\delta_R R$  \\
 $mRNA_a \to \emptyset$  & $\delta_{MA} mRNA_a$  \\
 $mRNA_r \to \emptyset$  & $\delta_{MR} mRNA_r$  \\
 $C \to R$  & $\delta_A' C$  \\ [1ex] 
 \hline
\end{tabular} 
\end{minipage}
\hspace{20mm}
\begin{minipage}{0.5\textwidth}
\begin{tabular}{|| c c ||}
\hline
Parameter & Value \\
\hline
$\alpha_A$ & $50.0$ \\
$\alpha_R$ & $0.01$ \\
$\beta_A$ & $50.0$ \\
$\beta_R$ & $5.0$ \\
$\gamma_C$ & $20.0$ \\
$\gamma_A$ & $1.0$ \\
$\theta_A$ & $50.0$ \\
$\gamma_R$ & $1.0$ \\
$\theta_R$ & $1.0$ \\
$\delta_A$ & $1.0$ \\
$\delta_R$ & $0.2$ \\
$\delta_{MA}$ & $10.0$ \\
$\delta_{MR}$ & $0.5$ \\
$\delta_A'$ & $1.0$ \\
$\alpha_a$ & $10.0$ \\
$\alpha_r$ & $5000$ \\
\hline
\end{tabular}
\end{minipage}
\caption{Genetic oscillator reactions, propensity functions, 
and nominal parameter values, see~\cite{sheppard2012pathwise}.}
\end{table}

As with the Michaelis--Menten system, the RTC models the evolution of the
stochastic system and the RRE models the deterministic system, with the two
models linked by thermodynamic limiting process. Figure \ref{fig:go_traject}
shows a sample trajectory of the stochastic system, simulated via the NRM.
In~\ref{fig:go_traject}, all parameters are set to nominal values and the only nonzero initial
states are $P_a$ and $P_r$, with one molecule of each. We plot the activator
protein $A$, the repressor protein $R$, and the complex $C$ up to final time $T
= 50$.
Returning to the original question illustrated in Figure~\ref{fig:diagram}, we
will use the sensitivity information gained from the cheaper, deterministic
model (RRE) to make conclusions about parameter importance in the more
expensive, stochastic model (RTC). 

\begin{figure}
\centering
\includegraphics[trim = 5 5 25 30, clip, width=.78\textwidth]{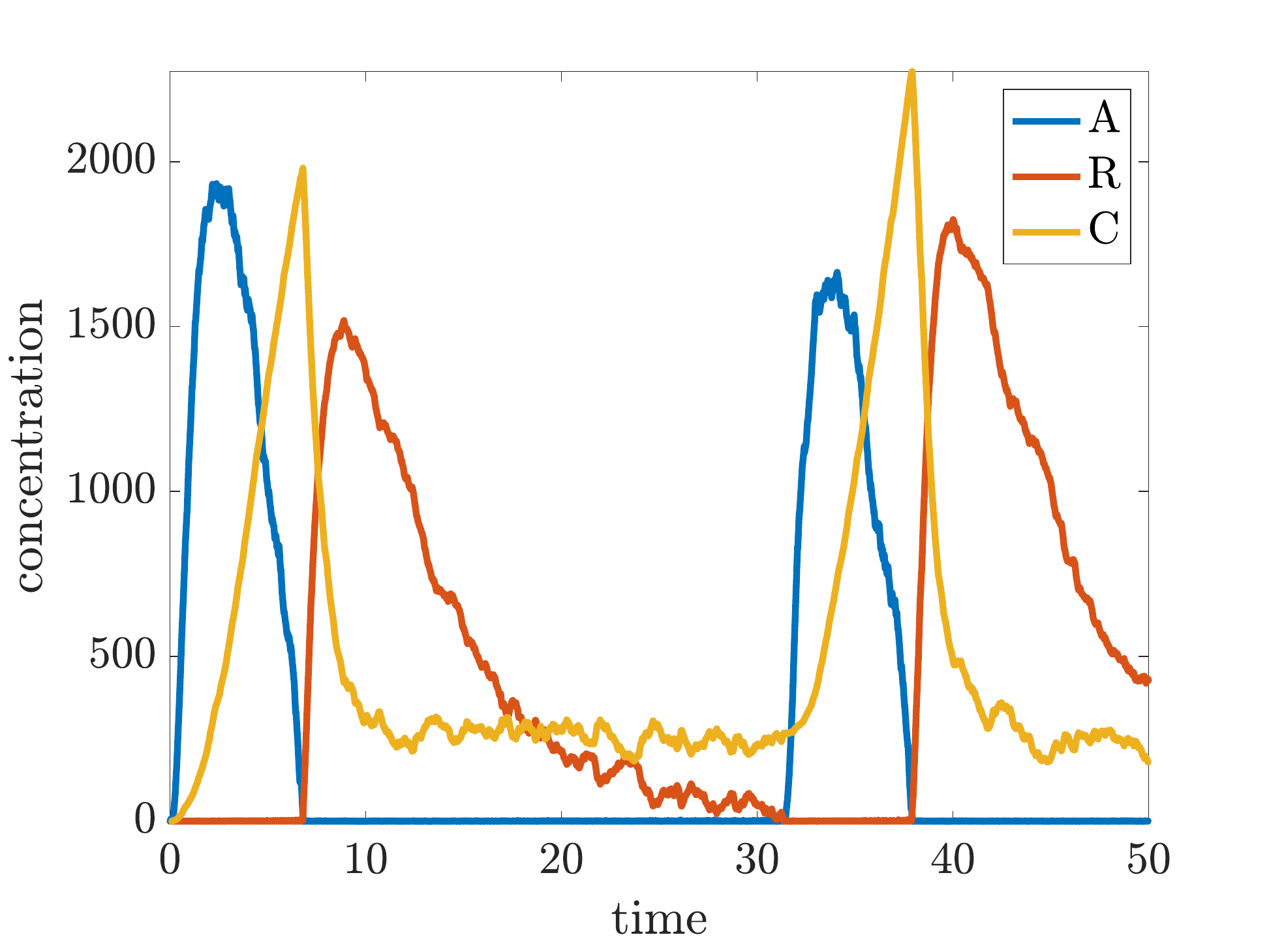}
\caption{Trajectories of the three dominant species at nominal parameters via the NRM.} \label{fig:go_traject}
\end{figure}

We define the stochastic and deterministic QoIs, respectively, as
\[
f_V(\boldsymbol\theta, \omega) = \frac{1}{T}\int_0^T R^V(t; \boldsymbol\theta, \omega)~dt \quad\text{and}\quad f(\boldsymbol\theta) = \frac{1}{T}\int_0^T R(t; \boldsymbol\theta)~dt,
\]
where $R^V$ is the concentration of the repressor computed via the NRM and $R$
is the concentration of the repressor computed as the solution to the
accompanying RRE.  Using the Monte-Carlo method presented
in~\cite{saltelli2010,saltellibook}, we then estimate the total
Sobol' indices for the deterministic model. Figure~\ref{fig:sobol_genetic}
shows the total Sobol' indices.
It is clear that $\alpha_A, \beta_A, \delta_{MA}, $ and
$\alpha_a$ are the four most important parameters, capturing over 50\% of the
variance of the deterministic QoI. 
\begin{figure}[ht]
\centering
\includegraphics[trim = 35 20 25 24,scale=.45]{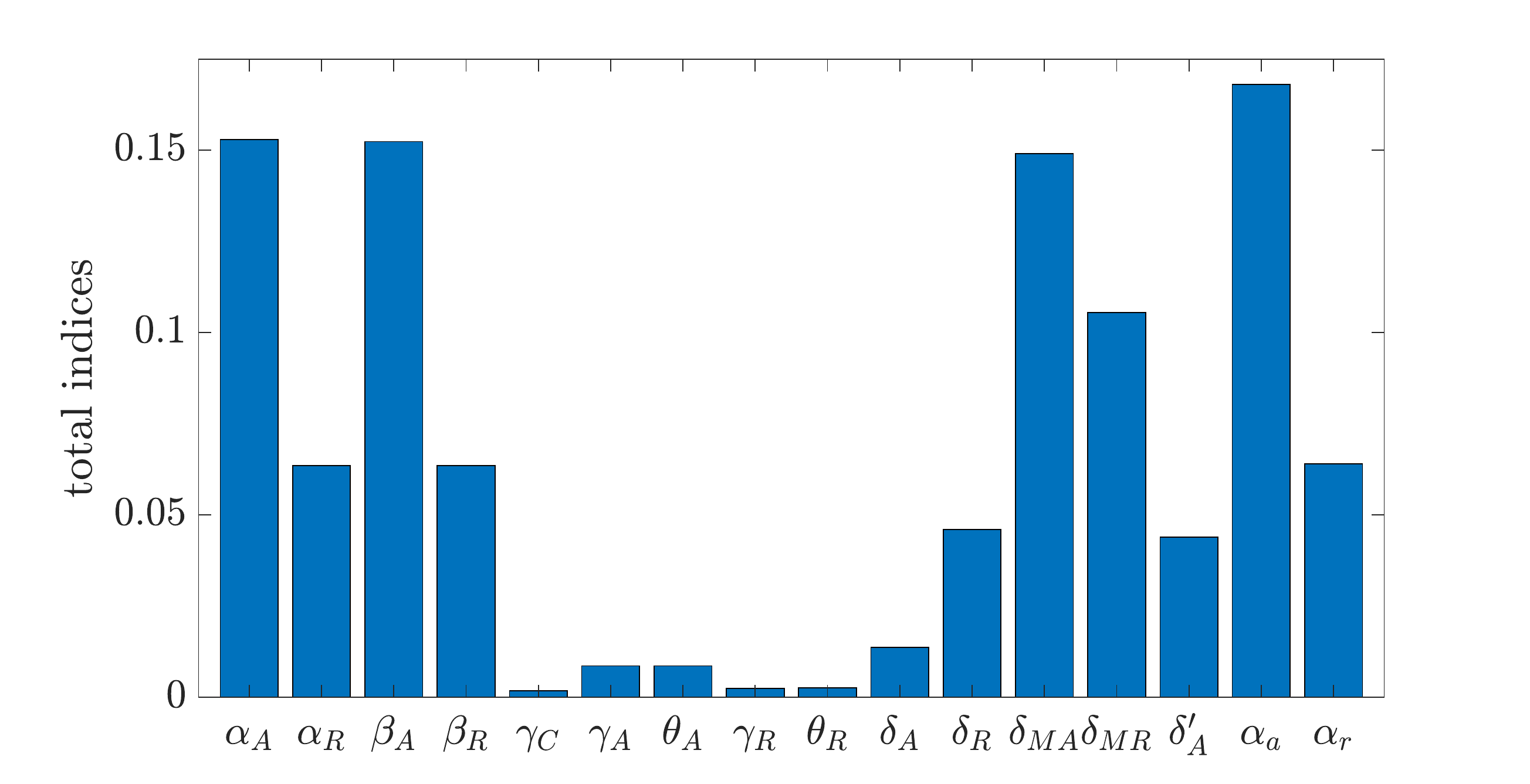}
\caption{Estimated total Sobol' indices for the genetic oscillator RRE.} \label{fig:sobol_genetic}
\end{figure}

We can determine unimportant inputs by putting an importance threshold on the
total Sobol' indices; parameters whose Sobol' index falls below the threshold
will be considered unimportant.  For instance, using $0.02$ as a threshold, we
identify $\gamma_C, \gamma_A, \theta_A, \gamma_R, \theta_R$, and $\delta_A$ as
the six least important parameters, capturing less than 5\% of the variance of
the deterministic QoI.  We then propose a reduced-dimensional model, where the six least important parameters are fixed at their nominal values, reducing the
dimensionality from sixteen to ten. To verify that this lower-dimensional model
remains an accurate representation of the full model, we sample the stochastic
QoI and plot its PDF while fixing and varying the unimportant parameters; see
Figure~\ref{fig:pdfs_fixedparams}.  The red dashed line, corresponding to the
reduced model with the six least important parameters fixed has a negligible
difference with the PDF of the full model.  Increasing the threshold from
$0.02$ to $0.05$ adds $\delta_R$ and $\delta_A^\prime$ to the unimportant
category.  However, as seen in Figure~\ref{fig:pdfs_fixedparams}, the
PDF of the resulting reduced model (dashed green line), obtained by fixing now
eight parameters shows a notable difference with the PDF of the full model.
This illustrates the balance one must strike between fixing unimportant
parameters to reduce parameter dimension and the loss of information that may
result from using a cheaper model.  Finally, we illustrate the impact of fixing the
four most important parameters (black dashed line in
Figure~\ref{fig:pdfs_fixedparams}). This approach fixes every parameter with a total Sobol' index greater than 0.15 ($\alpha_A, \beta_A, \delta_{MA}$, and $\alpha_a$). This results in a substantial underestimation of the variance and a potential loss of valuable model information.

\begin{figure}[htb]
\centering
\includegraphics[trim = 25 0 25 5, clip,width=.75\textwidth]{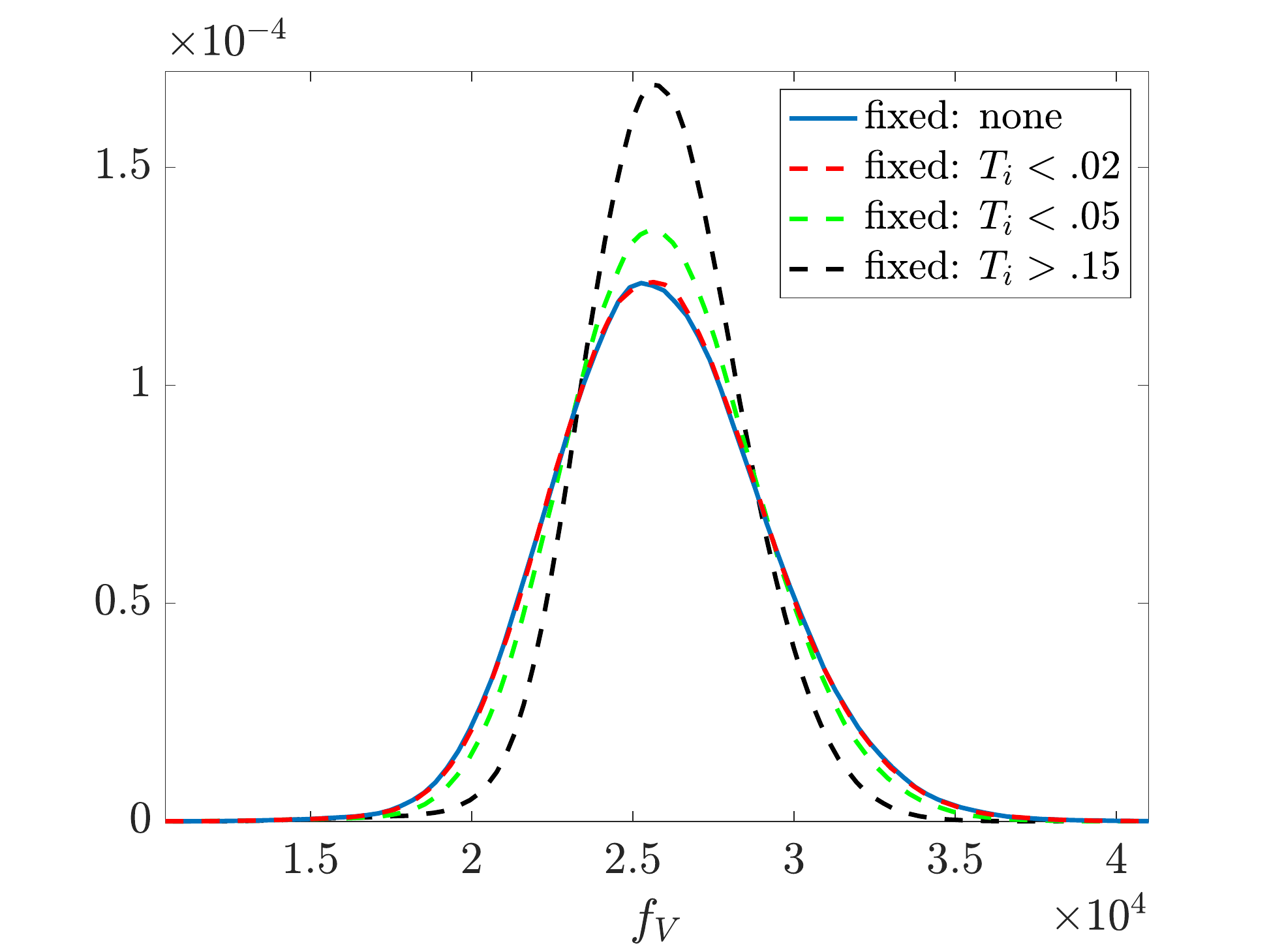}
\caption{PDFs of the stochastic QoI, $f_V$, sampled while fixing the following parameters:  black line ($\alpha_A, \beta_A, \delta_{MA}, \alpha_a$), green line ($\gamma_C, \gamma_A, \theta_A, \gamma_R, \theta_R, \delta_A, \delta_R, \delta_A^\prime$), red line ($\gamma_C, \gamma_A, \theta_A, \gamma_R, \theta_R, \delta_A$), black line without fixed parameters. Total index thresholds are provided for each PDF. } \label{fig:pdfs_fixedparams}
\end{figure}
\pagebreak
\section{Conclusions}
Sensitivity analysis is often performed on simplified surrogate models with the
{\em hope} that (\ref{general-q}) holds; i.e., the hope, explicit or not,  that
the results from the analysis of a surrogate model will hold for the 
full model. We have presented
here a partial result in that direction showing this assertion to be true for
a specific specifc class of problems (chemical systems), 
a specific type of surrogate (obtained from the thermodynamic limit) and a
specific GSA approach (Sobol' indices). Our study not only shows and justifies,
in an arguably restricted framework, that GSA can sometimes be done ``on the
cheap'', we argue that it reflects important properties of the GSA methods
themselves. Further study should consider other types of limiting processes
linking surrogates and full models such as homogenization of differential equations, 
discretization and projections, as well as more general types of GSA methods. 

\section*{Acknowledgements}
We acknowledge the computing resources provided on Henry2, a high-performance
computing cluster operated by North Carolina State University.  We also thank
Andrew Peterson for his assistance with distributed and parallel computations, which was provided through the Office of
Information Technology HPC services at NC State.

\bibliographystyle{siamplain}
\bibliography{mag}

\end{document}